\def\thesection{\arabic{section}}
\def\theequation{\thesection.\arabic{equation}}
\def\R{\mathbb{R}}
\newcommand{\EQ}[1]{\eqref{#1}}\newcommand{\re}[1]{\eqref{#1}}
\newcommand{\ds} {\displaystyle}
\newcommand{\e}{\epsilon}
\newcommand{\al} {\alpha}
\newcommand{\de} {\delta}
\newcommand{\ga} {\gamma}
\newcommand{\Ga} {\Gamma}
\newcommand{\Om} {\Omega}
\newcommand{\De} {\Delta}
\newcommand{\noi} {\noindent}
\newcommand{\mb} {\mathbb}
\newcommand{\mc} {\mathcal}
\markboth{\small } {\small Fractional  inequality in exterior domain}
\def\theequation{\@arabic{\c@section}.\@arabic{\c@equation}}
\def\QED{\hfill {$\square$}\goodbreak \medskip}
\newtheorem{Theorem}{Theorem}[section]
\newtheorem{Lemma}[Theorem]{Lemma}
\newtheorem{Proposition}[Theorem]{Proposition}
\newtheorem{Remark}[Theorem]{Remark}
\newtheorem{Definition}[Theorem]{Definition}
\newtheorem{Example}{Example}
\begin{document}
	
	{\vspace{0.01in}}
	
	\title
	{ \sc Nonexistence of positive supersolutions for semilinear fractional elliptic equations in exterior domains}
	
	\author{
		~~ Reshmi Biswas\footnote{e-mail: reshmi15.biswas@gmail.com} ~ and Alexander Quaas \footnote{alexander.quaas@usm.cl}\\
		Departamento de Matem\'atica,\\ Universidad T\'ecnica Federico Santa Mar\'ia\\ Casilla V-110, Avda. Espana,\\ 1680-Valpara\'iso, Chile.
	}
	
	\date{}

	\maketitle
	
	\begin{abstract}
		
		This article is concerned 	with  the nonexistence of positive solutions for  nonlinear fractional elliptic inequalities in exterior domains of $\mathbb R^n$, $n\geq1$. 
		We would like to highlight the fact that our results are new with very weak assumption on the nonlinear term appearing in the inequalities. 
		The results are also novel in this generality 
		even if  the whole space is considered.  
		
		\medskip
		
		\noi \textbf{Key words:} Liouville theorem, exterior domain,  semilinear inequality, fractional Laplacian
		
		\medskip
		
		\noi \textit{2010 Mathematics Subject Classification:} 35R11, 35R09, 35B53, 35D30, 35D40

	\end{abstract}
	
	\section{Introduction }
	In the present article, we study Liouville type theorems  and we are interested in the  nonexistence of positive solutions  of the following inequality:
	\begin{equation}\label{modeleq}
		(-\Delta)^s u \ge f(u,x),
	\end{equation}
	in  exterior domains of $\mb R^n,\, n\geq 1,$ where $s\in (0,1)$ and  $f$ is a positive continuous function with some appropriate assumptions described later.
	Here $ (-\Delta)^s $ is  the fractional Laplacian which is defined, for some suitable function $u:\mb R^n\to \mb R,$ as
	\begin{align*}
		(-\Delta)^s  u(x)&:= C_{n,s} P.V.\int_{\mb R^n}\frac{ u(x)-
			u(y)}{|x-y|^{n+2s}}  dy\notag\\=&C_{n,s}\lim_{\epsilon\to 0^+} \int_{\R^n\setminus B_\epsilon(x) }\frac{ u(x)-
			u(y)}{|x-y|^{n+2s}}  dy,
	\end{align*}
	where P.V. stands for Cauchy´s principal value (as defined by the latter equation), $B_\epsilon(x)$ denotes the ball  centered at $x \in \R^n$ with radius $\epsilon>0$,  and 
	\begin{align*}
		C_{n,s}=2^{2s}\pi^{-\frac n2}s\frac{\Gamma(\frac{n+2s}2)}{\Gamma(1-s)}>0
	\end{align*}   with $\Ga$ being
	the Gamma function. The fractional Laplacian can also be written (after change of variable), up to the normalized constant $C_{n,s}$, as  	\begin{align*}
		(-\Delta)^s  u(x)=C_{n,s}P.V.  \int_{\R^n }\frac{ u(x)-
			u(x-y)}{|y|^{n+2s}}  dy.
	\end{align*} 
	Such operators  are called  integro-differential operators and are nonlocal in nature, which arise  in various fields such as in   the study of thin obstacle problem, 	minimal surfaces, phase
	transitions, conservation laws, optimization, finance,  stratified materials, anomalous diffusion, crystal dislocation, soft thin films, semipermeable membranes, ultra-relativistic limits of quantum mechanics, quasi-geostrophic flows, multiple scattering, etc. For further details on these  operators, we refer to \cite{adv, bisci, hit} and references there in. 
	
	The fundamental solutions for the operator $(-\De)^s$  are radially symmetric (see \cite{CS2, FQ1}), given as, for  $x\in \mb R^n$
	\begin{equation}\label{fs}\Phi(x)=
		\begin{cases}
			|x|^{\sigma^*} \;\;\;\;\; &\text{if } \sigma^*<0\\
			-\log|x| \;\;\;\;\; &\text{if } \sigma^*=0\\
			-|x|^{\sigma^*} \;\;\;\;\; &\text{if } \sigma^*>0,
		\end{cases}
	\end{equation} where $\sigma^*=-n+2s,$ $s\in(0,1)$. We denote $\tilde \Phi:=-\Phi$, which also represents another set of fundamental solutions for $(-\De)^s$.  \\

	The study of elliptic partial differential equations often revolves around a fundamental question: given a nonnegative, nonlinear function \( f := f(u,x) \), under what conditions does there exist a positive solution or supersolution \( u(x) > 0 \) to the equation
	\[ -Q[u] = f(u,x) \]
	in some  domain of \( \mathbb{R}^n \)? Here, \( Q \) represents a second-order elliptic differential operator. 
	A substantial amount  of research developes into the Liouville results for solutions to equations of the type \( -Q[u] = f(u) \) in \( \mathbb{R}^n \), which was initiated by the seminal work of Gidas and Spruck \cite{GS}. For example  and as an extension, it was  established that the equations of type \( -\Delta u = f(u) \) in \( \mathbb{R}^n \) do not attain  classical positive solutions if \( t^{-\frac{n+2}{n-2}}f(t) \) is an increasing function on \( (0,\infty) \), as demonstrated in \cite{Li} and references therein. These findings are profound and intricate, as the range of nonexistence \( \left(\frac{n}{n-2},\frac{n+2}{n-2}\right) \) varies on various factors such as the conformal invariance of the Laplacian, the specific behavior of \( f \) in the whole interval \( (0,\infty) \), the  differential equality in the whole of $\mb R^n$, and the requirement for solutions to be classical in nature.

	\medskip
	In the local frame-work, for the typical semilinear inequality
	\[ (-\Delta)_p u \geq f(u),\;\;  p\geq2,\]
	where \( f \) is a positive continuous function defined on \( (0,+\infty) \), extensive research were conducted to establish the criteria for  nonexistence of positive solutions for such inequalities, both globally in \( \mathbb{R}^n \) and in  subsets of \( \mathbb{R}^n \), encompassing various choices of operators \( Q \) and nonlinear functions \( f \).
	Let us briefly overview the prior findings in this area. Given the extensive literature on both linear and quasilinear scenarios, we do not attempt to compile a comprehensive bibliography here. For thorough references, readers are directed to Veron's book \cite{VB}, and survey articles by Mitidieri and Pohozaev \cite{MP} and Kondratiev, Liskevich, and Sobol [3].
	Gidas \cite{gidas} proved  that the equation 
	$-\Delta u= u^\al$ admits no solutions  in $\R^n$, provided $\al\le \frac{n}{n-2}$.
	Ni and Serrin \cite{NS}, showed  the nonexistence of {\it decaying} radial solutions to some quasilinear inequalities like
	$(-\Delta)_p u\ge |x|^{-\gamma}u^\al$ in $\R^n$ for $\al\le \frac {(n-\gamma)(p-1)}{n-p}$. Later,  Bidaut-Veron \cite{BV} and Bidaut-Veron and  Pohozaev \cite{BVP} expanded these findings by relaxing restrictions on the behavior of a supersolution $u,$ demonstrating their applicability in exterior domains of $\mb R^n$.
	For further nonexistence results concerning positive solutions of quasilinear inequalities with power type nonlinearity in the right-hand sides, see Serrin and Zou \cite{SZ}, Liskevich, Skrypnik, and Skrypnik \cite{LSS}. Liouville-type results for semilinear inequalities in nondivergence form are explored in the work of Kondratiev, Liskevich, and Sobol \cite{KLS}.

	\medskip
	More general result in this aspect was obtained by Armstrong and Sirakov \cite{AS1}, where the authors  studied  the  following differential
	inequality  for $p$-Laplacian $(p\geq 2) $ as well as, for a very general Pucci operator \[ -Q[u] \geq f, \] in an exterior domain $\mb R^n \setminus B$, $n\ge 2$, where $B\subset \R^n$ is any ball. Under the hypotheses that $f:(0,+\infty) \to (0,+\infty)$ is  continuous, as well as
	\begin{equation}\label{modelf}
		0 < \liminf_{t\searrow 0} t^{-\frac {n}{n-2}}f(t) \leq +\infty\qquad\qquad\mbox{ if }n\ge 3,\end{equation}
	\begin{equation}\label{modelf1}
		\liminf_{t\to+\infty} e^{at} f(t) =+ \infty, \quad \mbox{for each} \ a > 0\qquad\mbox{ if } n=2,
	\end{equation}
	for each $a>0$, they established that there does not exist a positive (classical, viscosity or weak) solution of that inequality and they proved the same results with  more general nonlinearity $f(t,x)$ in the same spirit of  the assumptions \eqref{modelf}, \eqref{modelf1}. These types of results were extended to integral conditions by Alarcon, Garcia-Melian and  Quaas in \cite{AGQ}.

	\medskip
	In the the nonlocal frame-work,   Felmer and Quaas \cite{FQ1} first studied  the nonexistence results for the supersolutions of following equation 
	\begin{align}\label{fq}(-\De)^su=u^p \quad\text{in } \mb R^n\end{align}
	for the  critical case $p = \frac {n }{n - 2s}$ as well as  in connection with the extremal operators. {The nonexistence results for this fractional nonlinear equation in the corresponding Sobolev critical ($p= \frac {n+2s }{n - 2s}$) and sub-critical ($p<\frac {n+2s }{n - 2s}$) cases, we refer to  Li \cite{lii},  Chen, Li and Ou \cite{chen} and  other references in this context can be found in \cite{FQ1}.} In a very recent work \cite{chen1}, the authors studied the Liouville theorem for semilinear elliptic inequalities involving the fractional Hardy
	operators.

	\medskip
	Motivated by all the above results, in this article, we aim to establish  Liouville type theorems  for the inequality \EQ{modeleq}. Notably, we relax the conditions on \( f \) required for nonexistence, imposing only "local" criteria on the behaviours of $f(t,x)$ in accordance to the different behaviours of the fundamental solution $\Phi$ (see \eqref{fs}) across the different cases, viz., $2s<n, 2s=n, 2s>n.$ For the case $2s<n$, we consider only the behaviour of $f(t,x)$ near $t=0$  that determines whether or not solutions  exist for the inequality  \eqref{modeleq}, while in the case $2s\geq n$, it is the behavior of $f(t,x)$ at infinity which determines solvability of the inequality \EQ{modeleq}.
	
	Let us give a brief description of our main results in this article (Theorem \ref{thm2},     Theorem \ref{slthm} and Theorem \eqref{thm3}) below. For the convenience of the readers, motivated by \cite{AS1}, consider the example of the nonlinearity $f$ in a simpler form as follows:\begin{equation}\label{gbz}
		f(t,x) = |x|^{-\gamma}  g(t),\quad \ga<2s.
	\end{equation}
	Assume that $s\in(0,1)$, $n\geq 1$ and set $ \widetilde\al^*:= 1+ \frac{2s-\gamma}{-\sigma^*}$, where $\sigma^*:=-n+2s$. Then the  inequality \eqref{modeleq}
	has no positive solution in any exterior domain of $\mb R^n$, provided  the function $g:(0,+\infty)\to(0, +\infty)$ is continuous and satisfies the following:
	\begin{itemize} 
		\item  If $n=1,\, s=\frac 12$, then    $\ds \liminf_{t\to +\infty} e^{bt} g(t) > 0, $ {for every} $b>0.$
		\item  If $n=1,\, s>\frac 12$, {then}  $\ds \liminf_{t\to +\infty} t^{-\widetilde \al^*}g(t) > 0.$
		\item If $n>2s$, then $\ds \liminf_{t\to 0} t^{-\widetilde\al^*}g(t) > 0.$
	\end{itemize}
	We would like to mention  that in this article, we consider  very weak  conditions on $f$ for showing the nonexistence of positive solution for the inequality involving the fractional Laplacian in exterior domains of $\mb R^n,n\geq1$.  Our results also cover the case when the whole $\mb R^n$ is considered. We believe that, for the first time, this article addresses the non existence result for the inequality \eqref{modeleq}   covering all the dimensions $n\geq 1$ and all the cases $n\leq 2s$ and $n>2s$ in exterior domains of $\mb R^n$ as well as, in the whole of $\mb R^n$, where $n\geq1$. Furthermore, as per our best knowledge,  in the  lower dimensional case $n=1$ with $s\geq \frac 12,$ we have not come across such nonexistence result for \eqref{fq}  involving polynomial nonlinearity even in the whole domain $\mb R$, which is addressed in this article.\par
	One of the  main difficulties  in this article is to  construct suitable functions by implementing appropriate cuts on the fundamental solutions of the fractional Laplace equations which are necessary due to the nonlocal framework we consider here.  Whereas for proving the local version of our results,  in \cite{BC1} the authors directly used the fundamental solutions of Laplace equations.  Though the technique of cutting the fundamental solutions of the fractional Laplace equations was first introduced in \cite{FQ1} for the inequality  like \eqref{fq} in the whole domain  $\mb R^n$, we can not directly adapt that concept in our case  as our problem \eqref{modeleq} is defined  in an exterior domain  of $\mb R^n,n\geq 1$, and the nonlinearity $f$ is more general than that in \cite{FQ1}. Therefore, we face some further challenges to prove our results, in particular,  many suitable cuts on fundamental solutions  are used due to the nonlocal  nature of the operator. Moreover,  we need to carry out some more delicate and different careful analysis compared to the techniques used in \cite{AS1}  to comply with  the nonlocal setting in our problem. \par
	It is important to point out that since, our technique relies solely on properties associated with the maximum principle and the comparison principle for nonlocal operators, our method is applicable to inequalities involving nonlocal operators in both divergence and nondivergence forms, interpreted in classical, weak Sobolev, or viscosity senses, as appropriate.\\
	
	The article is organized as follows: In Section \ref{sec2}, we recall the definitions of  weak and viscosity solutions, and review some versions of the "quantitative" strong maximum principles in the fractional framework. Then, in Section \ref{sec3}, we  state and prove  the first main theorem (Theorem \ref{thm2}) of this article, viz., Liouville type theorem for the inequality \eqref{modeleq}  in  the case $2s\geq n$. Finally, in Section \ref{sectwo}, we address the remaining two main theorems (Theorem \ref{slthm} and Theorem \ref{thm3}), viz., Liouville type  theorems for the inequality \eqref{modeleq} in the case $2s<n$.
	\\\\
	\noi{\bf Notations:}
	\begin{enumerate}
		\item Throughout the paper, $C$ and $c$ denote generic positive constants which may vary from line to line.
		\item $B_r(x)$ denotes ball in $\mb R^n,n\geq1,$ of radius $r$ centered at $x$.
		\item $B_r$ denotes   ball in $\mb R^n,n\geq1,$ of radius $r$ centered at $0$.
		\item $\bar A$ denotes the closure of the set $A$.
		\item $|A|$ denotes the Lebesgue measure of the set $A\subset \mb R^n,n\geq1$.
		\item $C_c^\infty(A)$ denotes the set of all infinitely many times differentiable functions  with compact supports in $A$.
		\item $\chi_A$ denotes the characteristic function, that is $\chi_A=1$ in $A$ and $\chi_A=0$ in $\mb R^n\setminus A,\ n\geq1$.
	\end{enumerate}
	\section{Preliminary tools}\label{sec2}
	Before getting in the proofs of the main theorems in this article, we recall some preliminaries.  For $s\in(0,1),$ let us define the fractional Sobolev space
	\[H^s(\mb R^n)=\left\{ u \in L^2(\mb R^n)\,\,\, | \int_{\mb R^n}\int_{\mb R^n}\frac{|u(x)-u(y)|^2}{|x-y|^{n+2s}} dxdy<+\infty \right\} \] endowed with the norm 
	\[\|u\|_{H^s(\mb R^n)}=\| u \|_ {L^2(\mb R^n)}+ \left( \int_{\mb R^n}\int_{\mb R^n}\frac{|u(x)-u(y)|^2}{|x-y|^{n+2s}} dxdy \right)^{\frac 12}. \]  For any open bounded set $\Om\subset\mb R^n$, we also define the subspace of $H^s(\mb R^n)$
	\[H_0^s(\Om)=\left\{ u \in H^s(\mb R^n)\,\,\, | u=0 \text{ in } \mb R^n\setminus \Om\right\}, \] which is equipped with the norm \[\|u\|_{H_0^s(\Om)}= \left( \int_{\mb R^n}\int_{\mb R^n}\frac{|u(x)-u(y)|^2}{|x-y|^{n+2s}} dxdy \right)^{\frac 12}. \] Note that both $H^s(\mb R^n)$ and $H_0^s(\Om)$ are Banach spaces and $H^s(\mb R^n)=\overline{C_c^\infty(\Om)}^{\|\cdot\|_{H^s(\mb R^n)}};$ $H_0^s(\Om)=\overline{C_c^\infty(\Om)}^{\|\cdot\|_{H_0^s(\Om)}}$. Now let us define the notion of weak solution as following. One may refer to \cite{hit,SV1} and references there in for more details.
	\begin{Definition}[Weak Solutions] 
		Let $\Om$ be an open bounded set in $\mb R^n$.
		A function $u \in H_0^s(\Om)$  is said to be a weak subsolution (or weak supersolution) to $(-\De)^s u = f (u,x) $ in $\Om$; $u=0$ in $\mb R^n\setminus\Om$, 	and we write $\mc (-\De)^s u \leq (\geq) f (u,x) $
		in $\Om$, if  $u\leq (\geq) 0$ in $\mb R^n\setminus \Om$ and for any test function $w\in H_0^s(\Om), w\geq 0,$ 	it holds that
		\[
		\int_{\mb R^n}\int_{\mb R^n}\frac{(u(x)-u(y))(w(x)-w(y))}{|x-y|^{n+2s}} dxdy	
		\leq(\geq) \int_{\mb R^n} f(u(x),x) w(x)dx.\]  	A weak solution is a function $u$ which is both a weak subsolution and a weak supersolution, that is if for  any test function $w\in H_0^s(\Om),$
		\begin{align}\label{wf}
			\int_{\mb R^n}\int_{\mb R^n}\frac{(u(x)-u(y))(w(x)-w(y))}{|x-y|^{n+2s}} dxdy
			=\int_\Om f(u(x),x)w(x)dx.\end{align}
		When $\Om$ is an unbounded open set, then $u$ is a subsolution (or supersolution) to $(-\De)^s u = f (u,x) $ in $\Om$,  if for all bounded open set $\Om^\prime\subset \Om$, $u$ is a  subsolution (or supersolution) to $(-\De)^s u = f (u,x) $ in $\Om^\prime$.
	\end{Definition}
	
	\noi	Next, we define the notion of viscosity solution. For further details, see \cite{BI, CS1, SV1} and the references therein.
	\begin{Definition}[Viscosity Solutions] Let $\Om$ be an open set in $\mb R^n$.
		%		Let $\mc I$ be a uniformly elliptic operator in the
		%	sense of (1.2) with respect to some class L . 
		A function $u : \mb R^n \to \mb R,$ upper (lower)
		semi continuous in $\overline\Om$ and $u\in L^1(\mb R^n,\omega)$ with $\omega=1/(1+|y|^{n+2s})$, is said to be a viscosity  subsolution (or viscosity supersolution) to $(-\De)^s u = f,$
		and we write $\mc (-\De)^s u\leq (\geq) f $ in $\Om$, if every time all the following
		happen\begin{itemize}
			\item	 $x_0$ is any point in $\Om$.
			\item  $N$ is a neighborhood of $x_0$ in $\Om$.
			\item $\varphi$ is some $C^2$ function in $N$.
			\item $ \varphi(x_0) = u(x_0).$
			\item $\varphi (x) < (>)u(x),$  for every $ x \in N\setminus\{x_0\}.$
		\end{itemize}
		Then if we choose as test function 
		$w (x):= 
		\begin{cases}\varphi (x), \text{ for   } x\in N\\
			u (x),\text{ for } x\in \mb R^n\setminus N,
		\end{cases}$\\
		we have $(-\De)^s w(x_0)   \leq (\geq) f (x_0) .$ 
		A viscosity solution is a function $u$ which is both a viscosity subsolution and a viscosity  supersolution.
	\end{Definition}
	We now state a version of the comparison principle for the fractional Laplacian, an important tool in the analysis of nonlocal partial differential equations, which we rigorously use in our proofs. A detailed exposition of this topic can be found in \cite{CS1, CS3, Ro, RS} and the references therein.
	\begin{Theorem}[Comparison Principle ]\label{cppp}
		Let $\Omega \subset \mathbb{R}^N$ be a bounded open set and let $s \in (0,1)$. Let $u_1$ and $u_2$ be weak (viscosity) solutions to
		\[\left\{ \begin{array}{rcll}
			(-\Delta)^s u_1 &=&f_1&\textrm{in }\Omega \\
			u_1&=&g_1&\textrm{in }\R^n\backslash\Omega\end{array}\right.
		\qquad \textrm{and}\qquad
		\left\{ \begin{array}{rcll}
			(-\Delta)^s u_2 &=&f_2&\textrm{in }\Omega \\
			u_2&=&g_2&\textrm{in }\R^n\backslash\Omega.\end{array}\right.\]
		Assume that $f_1\leq f_2$ and $g_1\leq g_2$.
		Then, $u_1\leq u_2$.
	\end{Theorem}
	
	The important aspects in the proof of the Liouville type theorems in this article are the  "quantitative" strong maximum principles in the fractional framework. For that, first we need the following key result which can be found in \cite{bm} and for the sake of  completeness, we give a proof of that result. The   local version of the result is established in \cite[Lemma 3.2]{BC1}.
	\begin{Lemma}\label{mp}
		Let $\Om\subset \mb R^n,\, n\geq1,$ be a smooth    bounded  domain.	Suppose that $h\geq 0$ and $h\in L^\infty(\Om)$. { Let $v\in H_0^s(\mb R^n)$} be a weak solution to the following equation:
		\begin{equation}\label{ol}
			\begin{cases}(-\De)^s v &=  h \;\; \; \text{ in } \Om\\
				v &=0  \quad\text{ in }\; \mb R^n\setminus\Om.
			\end{cases}
		\end{equation}
		Then for all $x\in \Om$, \begin{align}\label{*}\frac{v(x)}{\de^s(x)}\geq C_\Om\int_\Om h(y)\de^s(y) dy,
		\end{align} where $C_\Om>0$ is a positive constant depending only on $s,n$, $\Om$ and $\de(y):=dist (y, \mb R^n\setminus\Om),\, y\in \Om, $ is the distance function.
	\end{Lemma}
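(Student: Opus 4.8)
The plan is to follow the strategy of the local statement \cite[Lemma 3.2]{BC1}, with the Newtonian potential replaced by the Green function of $(-\De)^s$ on $\Om$. First I would observe that, since $\Om$ is smooth and bounded and $h\in L^\infty(\Om)\subset L^2(\Om)$, problem \eqref{ol} has a unique weak solution in $H_0^s(\mb R^n)$, so $v$ necessarily coincides with the Green potential
\[
v(x)=\int_\Om G_\Om(x,y)\,h(y)\,dy,\qquad x\in\Om,
\]
where $G_\Om$ is the Green function of $(-\De)^s$ in $\Om$ with zero exterior data, i.e. the kernel of the solution operator of \eqref{ol}. In particular $v\ge 0$ in $\mb R^n$, and the whole statement reduces to the single pointwise lower bound for the kernel
\[
G_\Om(x,y)\ \ge\ C_\Om\,\de^s(x)\,\de^s(y)\qquad\text{for all }x,y\in\Om,
\]
with $C_\Om>0$ depending only on $n,s,\Om$: inserting this into the representation formula and using $h\ge 0$ gives $v(x)\ge C_\Om\,\de^s(x)\int_\Om h(y)\de^s(y)\,dy$, and dividing by $\de^s(x)>0$ yields \eqref{*}.

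The heart of the proof is therefore this uniform lower bound for $G_\Om$ on $\Om\times\Om$. Here I would appeal to the known sharp two-sided estimates for the Green function of the fractional Laplacian on bounded $C^{1,1}$ domains — of the form $G_\Om(x,y)\asymp|x-y|^{2s-n}\bigl(1\wedge \de^s(x)\de^s(y)/|x-y|^{2s}\bigr)$ when $n>2s$, with the corresponding logarithmic expression when $n=2s$ and a bounded expression when $n<2s$ (i.e. $n=1$, $s>\tfrac12$), all of which are explicit in dimension one. Splitting into the two regimes $\de^s(x)\de^s(y)\le|x-y|^{2s}$ and $\de^s(x)\de^s(y)>|x-y|^{2s}$, and using in each case that $\de(x),\de(y)\le\mathrm{diam}\,\Om$ and $|x-y|\le\mathrm{diam}\,\Om$, one checks that in every dimensional case the right-hand side of the sharp estimate dominates a fixed positive multiple of $\de^s(x)\de^s(y)$ (in the first regime $G_\Om\gtrsim|x-y|^{-n}\de^s(x)\de^s(y)\gtrsim(\mathrm{diam}\,\Om)^{-n}\de^s(x)\de^s(y)$; in the second regime the relevant expression is bounded below by a constant, or by $(\de(x)\de(y))^{s-1/2}$ when $2s>n$, which again dominates $\de^s(x)\de^s(y)$ since $\de(x)\de(y)\le(\mathrm{diam}\,\Om)^2$). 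This gives the desired bound and, combined with the first step, completes the proof.

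The step I expect to be the main obstacle is exactly this uniform Green lower bound, since it simultaneously encodes the behaviour of $G_\Om$ near the diagonal (which differs according to $2s\lessgtr n$) and its boundary decay, and rigorously it rests on the boundary Harnack principle for $(-\De)^s$ together with sharp heat kernel bounds. As a more self-contained fallback I would keep the following in reserve: testing the weak formulation \eqref{wf} for $v$ against the $s$-torsion function $\tau$ (satisfying $(-\De)^s\tau=1$ in $\Om$, $\tau=0$ outside, $\tau\asymp\de^s$) yields $\int_\Om v=\int_\Om h\,\tau\asymp\int_\Om h\,\de^s$; one then propagates this $L^1$ lower bound to interior points via the mean value inequality for $s$-superharmonic functions (the Poisson kernel of a small ball contained in $\Om$), and finally descends to the boundary layer $\{0<\de(x)<\de_0\}$ by comparison with a $\de^s$-type barrier and the boundary Harnack principle. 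Either way, the boundary Harnack principle for the fractional Laplacian is the essential analytic input.
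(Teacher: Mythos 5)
Your main argument is correct, but it is a genuinely different route from the paper's. The paper proceeds in two steps without ever invoking the Green function: in Step~1 it covers a compact $K\subset\Om$ by finitely many balls, solves the auxiliary problems $(-\De)^s\zeta_i=\chi_{B_\al(x_i)}$, applies the fractional Hopf lemma to get $\zeta_i\gtrsim\de^s$, and combines the fractional mean value inequality (Bucur--Valdinoci) with the weak formulation to obtain $v(x)\ge C\int_\Om h\,\de^s$ for $x\in K$; in Step~2 it propagates this interior bound to $\Om\setminus K$ by comparison with the $s$-harmonic function $w$ equal to $1$ on $K$ and $0$ outside $\Om$, again using the fractional Hopf lemma, this time applied to $w$. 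Your main line instead writes $v=\int_\Om G_\Om(\cdot,y)h(y)\,dy$ and reduces everything to the uniform lower bound $G_\Om(x,y)\gtrsim\de^s(x)\de^s(y)$, extracted from the sharp two-sided Green function estimates on $C^{1,1}$ domains. Both arguments are sound, and I checked that your case analysis of the sharp estimate does yield the required kernel lower bound in all three regimes $n\gtrless 2s$. The trade-off is what each takes as input: the Green-function route is conceptually a one-liner once you accept the sharp kernel estimates, but those estimates (near-diagonal blowup plus boundary decay, simultaneously, in all dimensional regimes) are heavy machinery whose proofs themselves rest on the boundary Harnack principle and substantially exceed what the lemma actually needs; the paper's route is more elementary and self-contained, requiring only the pointwise fractional Hopf lemma and the interior mean value property, and thereby mirrors the local argument of Brezis--Cabr\'e that the authors explicitly cite as the model. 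Notably, your stated fallback (torsion function, mean value propagation, barrier comparison near $\pa\Om$) is very close in spirit to what the paper actually does, so it is a better match to the authors' intent than your primary plan; the only discrepancy there is that the paper replaces your appeal to the boundary Harnack principle by the fractional Hopf lemma applied to the auxiliary solutions $\zeta_i$ and $w$.
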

	\begin{proof}
		{\bf 	Step 1.} For any compact set $K\subset \Om$, we first show that 
		\begin{align}\label{qm1}
			v(x)\geq C\int_\Om h(y) \de^s(y) dy, \;\text{ for all }\; x\in K,
		\end{align} where $C$ is a positive constant depending only on $s, n,  K$ and $\Om$. Let us set
		\[\alpha:=\frac{dist (K, \mb R^n\setminus\Om)}{4}\] and since $K$ is compact, we can choose $m$ balls of radius $\al$ centered at $x_1, x_2,\cdots, x_m \in K$ such that 
		\[K\subset B_\al(x_1)\cup B_\al(x_2)\cup\cdots\cup B_\al(x_m)\subset \Om.\] Let $\zeta_1,\zeta_2, \cdots,\zeta_m\in H_0^s(\Om)$ be the weak solutions of 
		\begin{equation}\label{oll}
			\begin{cases}(-\De)^s \zeta_i  &=  \mc X_{B_{\al}(x_i)} \;\; \; \quad in \;\;  \Om\\
				\zeta_i &=0  \;\;\; \; \;\;\;\;\;\qquad in \;\; \mb R^n\setminus\Om,
			\end{cases}
		\end{equation} where $i=1,2,\cdots, m.$
		Then, by the fractional Hopf's lemma {(see \cite{ AP1, Ro})}, there exists a constant $C:=C(s,n,\Om)>0$ such that, for all $x\in \Om, $ 
		\begin{align}\label{qm2}
			\zeta_i(x)\geq C\de^s(x), \; \text { for all } 1\leq i\leq m.
		\end{align}Fix $z\in K$ and choose a ball $B_\al(x_i)$ such that $z\in B_\al(x_i) $. 
		Then, $B_\al(x_i)\subset B_{2\al}(z)\subset B_{4\al}(z)\subset \Om$. Since $(-\De)^sv(x)=h(x)\geq 0$ in $\Om$,  by the maximum principle (see \cite{SV1, Ro}), $ v(x)\geq 0$ in $\mb R^n.$ Therefore,  $(-\De)^sv(x)\geq 0$ in $ B_{4\al}(z)$. For any $x\in \mb R^n,$ define the function
		\[w_\al(x):=v\left(2\al x+z\right).\] Clearly, $(-\De)^sw_\al \geq 0$ in $ B_2$  and $ w_\al\geq0$ in $\mb R^n$. Now recalling    {\cite[Proposition 3.3.4]{bukur}}, using  the weak formulation \eqref{wf} for \eqref{ol} with $u=v,\, w= \zeta_i$ and for  \eqref{oll} with $u=\zeta_i,\,w=v$, and then  plugging \eqref{qm2} in it, we deduce that
		\begin{align*}\label{dya}
			v(z)=w_{\al}(0)&>C\int_{B_1} w_\al(\tau)d\tau\notag\\
			&=C
			\int_{B_1} v(2\al \tau+z)d\tau
			\notag\\
			&	=C\int_{B_{2\al}(x)} v(y)dy\quad\text{(using change of variable $y=2\al \tau+z$)}\notag\\
			&\geq C\int_{B_\al(x_i)}v(y) dy\notag\\
			&=C
			\int_{\mb R^n}\int_{\mb R^n}\frac{(v(x)-v(y))(\zeta_i(x)-\zeta_i(y))}{|x-y|^{n+2s}} dxdy	\notag\\
			&=C\int_\Om h(y)\zeta_i(y)\, dy
			\geq 
			C\int_\Om h(y)\de^s(y)\, dy,
		\end{align*} where $C>0$ is a generic constant depending only on $s,n,\Om, K$ and  which may vary from line to line. Thus, we get \eqref{qm1}.\\
		{\bf Step  2.} Fix a smooth compact set $K\subset \Om$. By \eqref{qm1}, it follows that
		\begin{equation}\label{ck}	
			v\geq C_K\int_{\Om} h(y)\de^s (y)dy \qquad \text{ in } K,	\end{equation}
		
		where $C_K>0$ is a positive  constant which depends  only on $s,n,\Om, K.$ 	So, it is sufficient to prove \eqref{*},  for all $x\in \Om\setminus K.$
		Now, using the last relation, we can rewrite \eqref{ol} as
		\begin{equation}\label{vr}
			\begin{cases}(-\De)^s V_K&=  0 \; \;\; \quad\text{ in } \Om\setminus K\\
				V_K &=0  \;\; \;\; \quad\text{ in }\; \mb R^n\setminus\Om\\
				V_K &\geq 1   \;\; \;\; \quad\text{ in }\; K,
			\end{cases}
		\end{equation}   where $V_K(x):=\ds \frac {v(x)}{C_K\int_\Om h(y)\de^s (y)dy}$ and $C_K$ is as in \eqref{ck}.
		Let $w$ be a solution of 
		\begin{equation}\label{wr}
			\begin{cases}(-\De)^s w &=  0 \; \;\; \quad\text{ in } \Om\setminus K\\
				w &=0  \;\; \;\; \quad\text{ in }\; \mb R^n\setminus\Om\\
				w &=1   \;\; \;\; \quad\text{ in }\; K.
			\end{cases}
		\end{equation}
		Again, using the fractional Hopf's lemma \cite{AP1, Ro} to \eqref{wr},  we obtain
		\begin{align}\label{qm3}
			w(x)\geq C\de^s(x), \; \text{for all } x\in \Om\setminus K,
		\end{align} where $C>0$ is a positive  constant  depending  only on $s,n,\Om, K.$  
		Now applying  the comparison principle for the fractional Laplacian (Theorem \ref{cppp}) to \eqref{vr} and \eqref{wr}, and then using \eqref{qm3}, we deduce
		\[v(x)\geq C_K\left(\int_\Om h(y)\de^s(y) dy\right) w(x)\geq C _{\Om \setminus K}\left(\int_\Om h(y)\de^s(y) dy\right) \de^s(x),  \; \text{ for all } x\in\Om\setminus K,\] where and $C_K$ is as in \eqref{ck} and $C_{\Om \setminus K}>0$ is a positive  constant which depends  only on $s,n,\Om, K.$ This and \eqref{ck}  with $C_\Om:=\min\{C_K,C_{\Om \setminus K}\}$ complete the proof of the lemma. 
	\end{proof}	\hfill{\QED}
	Using  Lemma \ref{mp}, we can derive the following version of  nonlocal "quantitative" strong maximum principle  for  the weak sub-supersolution, which is useful to prove our main theorems in this article. The similar result for the local Laplace  operator was stated in \cite[Lemma 2.2]{AS1}.
	\begin{Proposition}\label{kslap}
		Assume $h \in L^\infty\left (B_3\setminus B_{\frac 12}\right)$ is non-negative, 
		{and $u\geq 0$} in $\mb R^n, \,n\geq 1,$ and it satisfies
		\begin{equation}\label{kkv}
			(-\Delta)^s u \geq h  \quad \mbox{in} \ B_3\setminus B_{\frac 12}
		\end{equation} in the  "weak" sense.
		Then, there exists a constant $\bar c>0$ depending only on $s,n$ such that  for each $A\subset B_2\setminus B_1$
		$$
		\inf_{B_2\setminus B_1}u \geq \bar c |A|\inf_A h.
		$$
	\end{Proposition}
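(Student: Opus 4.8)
The plan is to reduce Proposition \ref{kslap} to Lemma \ref{mp} by a localization and comparison argument. First I would set $\Omega := B_3 \setminus \overline{B_{1/2}}$, which is a smooth bounded domain (an annulus), and fix $A \subset B_2 \setminus B_1$. The idea is to compare $u$ from below with the solution $v$ of the auxiliary Dirichlet problem
\begin{equation*}
(-\Delta)^s v = \bigl(\inf_A h\bigr)\,\chi_A \quad \text{in } \Omega, \qquad v = 0 \quad \text{in } \mathbb R^n \setminus \Omega.
\end{equation*}
Since $h \geq (\inf_A h)\chi_A$ pointwise in $\Omega$ and $u \geq 0 = v$ in $\mathbb R^n \setminus \Omega$, the comparison principle for the fractional Laplacian (cited in the excerpt, e.g. \cite{SV1, Ro}) gives $u \geq v$ in all of $\mathbb R^n$, in particular on $B_2 \setminus B_1$. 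Here I should be mildly careful that the comparison is applied on $\Omega$ where both $(-\Delta)^s u \geq (\inf_A h)\chi_A$ and $(-\Delta)^s v = (\inf_A h)\chi_A$ hold in the weak sense and the exterior data are ordered; this is exactly the setting in which the weak comparison principle applies.

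Next I would apply Lemma \ref{mp} with this $\Omega$ and with $h$ there replaced by $(\inf_A h)\chi_A \in L^\infty(\Omega)$, which is nonnegative. The lemma yields, for every $x \in \Omega$,
\begin{equation*}
\frac{v(x)}{\delta^s(x)} \geq C_\Omega \int_\Omega \bigl(\inf_A h\bigr)\chi_A(y)\,\delta^s(y)\,dy = C_\Omega \bigl(\inf_A h\bigr) \int_A \delta^s(y)\,dy,
\end{equation*}
where $\delta(y) = \mathrm{dist}(y, \mathbb R^n \setminus \Omega)$ and $C_\Omega$ depends only on $s, n$ (the domain $\Omega$ is now a fixed universal annulus). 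The final step is to turn both $\delta^s(x)$ on the left and $\int_A \delta^s$ on the right into universal constants using that we only care about $x \in B_2 \setminus B_1$ and $A \subset B_2 \setminus B_1$: on the closed annulus $\overline{B_2 \setminus B_1}$ the distance to $\partial\Omega = \partial B_3 \cup \partial B_{1/2}$ is bounded below by $\min\{1, 1/2\} = 1/2$, so $\delta^s(x) \geq (1/2)^s$ there, and likewise $\delta^s(y) \geq (1/2)^s$ for $y \in A$, whence $\int_A \delta^s(y)\,dy \geq (1/2)^s |A|$. Combining,
\begin{equation*}
\inf_{B_2 \setminus B_1} u \geq \inf_{B_2 \setminus B_1} v \geq (1/2)^s \cdot C_\Omega \cdot (1/2)^s |A| \inf_A h =: \bar c\, |A| \inf_A h,
\end{equation*}
with $\bar c = C_\Omega \, 2^{-2s}$ depending only on $s, n$.

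The only genuinely delicate point I anticipate is making sure the hypotheses of Lemma \ref{mp} and of the comparison principle are met in the weak formulation: in particular, that $u$ restricted appropriately (or $u$ itself, since $u \geq 0$ in $\mathbb R^n$) is an admissible competitor, and that the lemma's conclusion — stated for a weak solution $v \in H_0^s(\mathbb R^n)$ of the Dirichlet problem — can legitimately be paired with the weak supersolution $u$ via comparison. Since $u \geq 0$ everywhere and the exterior data match up, this is standard, but it is where the argument must be stated with care rather than appealing to pointwise evaluation. Everything else is bookkeeping with explicit geometric constants on the fixed annulus $B_3 \setminus B_{1/2}$, and since that annulus never changes, all constants produced are automatically uniform, depending only on $n$ and $s$.
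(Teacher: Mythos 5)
Your proposal is correct and follows essentially the same route as the paper: apply Lemma \ref{mp} on the fixed annulus $\Omega=B_3\setminus B_{1/2}$, invoke the comparison principle to get $u\ge v$, and then bound $\delta^s$ from below on $B_2\setminus B_1$. The only cosmetic difference is that the paper solves the auxiliary Dirichlet problem with $h$ itself and lower-bounds $\int_\Omega h\,\delta^s\ge(\inf_A h)\int_A\delta^s$ at the end, whereas you replace $h$ by $(\inf_A h)\chi_A$ before solving; the two orderings of the estimate yield the same constant.
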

	\begin{proof}
		Consider $\Om=B_3\setminus B_{\frac 12}$ in Lemma \ref{mp}.
		Now   applying  the comparison principle  to \eqref{ol} and \eqref{kkv}, we find that $u\geq v$ in $\mb R^n$. Therefore, using \eqref{*}, the result follows.
	\end{proof}\hfill{\QED}
	Another form of the "quantitative" strong maximum principle for the fractional Laplacian is proved below  for the  weak sub-supersolution. The similar  result in the local framework was established in    \cite[Theorem 3.3]{AS1}.
	\begin{Proposition} \label{qsmp}
		Let $K$ and $A$ be two compact subsets of a smooth bounded domain $\Omega\subset\R^n$, with $|A| > 0$. Suppose that { $v\geq 0$  in $\mb R^n, \, n\geq 1,$} and it satisfies
		\begin{equation*}
			(-\Delta)^s v\geq \chi_A \quad \mbox{in} \ \Omega
		\end{equation*} in the "weak"  sense,
		where $\chi_A$ denotes the characteristic function of $A$.
		\begin{enumerate}\item[(i)] Then there exists a constant $c_0=c_0(s,|A|,\Omega,K)>0$ such that
			\begin{equation*}
				v \geq  c_0\quad\mbox{on }\; K.
			\end{equation*}
			\item[(ii)] Suppose in addition that $v\ge \Phi^*\ge 0$ in $\mb R^n\setminus \Omega$, where $\Phi^*=\Phi$ or $=\tilde\Phi$ as per the positive sign of them (where $\Phi,\, \tilde \Phi$ are defined in \eqref{fs}) and $0\not\in \Omega$. Then there exists a constant $c_0=c_0(s,|A|,\Omega,K)>0$ such that
			\begin{equation*}
				v \geq \Phi^*+ c_0\quad\mbox{on }\; K.
			\end{equation*}
		\end{enumerate}
	\end{Proposition}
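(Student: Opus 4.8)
The plan is to deduce both parts from Lemma~\ref{mp} and Proposition~\ref{kslap}, using the comparison principle to insert the subsolution constructed there. For part (i), I would first localize: since $\chi_A \geq 0$ is in $L^\infty(\Omega)$, let $v_0 \in H_0^s(\Omega)$ be the weak solution of $(-\Delta)^s v_0 = \chi_A$ in $\Omega$, $v_0 = 0$ in $\mb R^n\setminus\Omega$. By the comparison principle (applied to $v$ and $v_0$, using $v \geq 0 = v_0$ outside $\Omega$), we get $v \geq v_0$ in $\mb R^n$. Now Lemma~\ref{mp} gives, for every $x\in\Omega$,
\[
\frac{v_0(x)}{\de^s(x)} \geq C_\Omega \int_\Omega \chi_A(y)\,\de^s(y)\,dy = C_\Omega \int_A \de^s(y)\,dy > 0,
\]
the last integral being strictly positive because $|A|>0$ and $\de^s>0$ a.e.\ in $\Omega$. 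On the compact set $K\subset\Omega$ the distance function $\de^s$ is bounded below by a positive constant $\de_K^s$, so $v \geq v_0 \geq C_\Omega\,\de_K^s\left(\int_A \de^s(y)\,dy\right) =: c_0$ on $K$, with $c_0 = c_0(s,n,\Omega,K)$ (the dependence on $|A|$ being implicit through the integral over $A$). This proves (i).

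For part (ii), the idea is to compare $v$ with $w := v_0 + \Phi^*$, where $v_0$ is as above and $\Phi^*$ is the relevant (nonnegative) fundamental solution. Since $(-\Delta)^s \Phi^* = 0$ away from the origin and $0\notin\Omega$, we have $(-\Delta)^s w = \chi_A$ in $\Omega$ in the appropriate sense; moreover $w = \Phi^* \leq v$ in $\mb R^n\setminus\Omega$ by hypothesis. The comparison principle for the fractional Laplacian then yields $v \geq w = v_0 + \Phi^*$ in $\mb R^n$, and in particular on $K$ we get $v \geq \Phi^* + v_0 \geq \Phi^* + c_0$ with the same $c_0$ from part (i). The one point requiring care is that $\Phi^*$ is not in $H_0^s$ and is only locally integrable against the weight $1/(1+|y|^{n+2s})$, so the comparison must be carried out in the weak/distributional sense valid on bounded subdomains $\Omega'\subset\Omega$ with $0\notin\bar\Omega'$; since $0\notin\Omega$ this covers all of $\Omega$ by the definition of (super)solution on unbounded or punctured domains, and $\Phi^*$ is smooth and $s$-harmonic there.

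The main obstacle I anticipate is precisely this last technical point: justifying that $v_0 + \Phi^*$ is an admissible comparison function for $(-\Delta)^s$ in $\Omega$, given that $\Phi^*$ has a singularity (or logarithmic growth, or growth at infinity) at the origin and that the nonlocal tails of $\Phi^*$ interact with the solution on all of $\mb R^n$. This is handled by the standing assumption $0\notin\Omega$, by working on bounded subdomains away from $0$ where $\Phi^*$ is a classical $s$-harmonic function, and by invoking the comparison principle in the form stated in the references (\cite{SV1, Ro}), which applies to functions in $L^1(\mb R^n,\omega)$ ordered outside the domain. Everything else — the positivity of $\int_A \de^s$, the lower bound of $\de^s$ on $K$, and tracking the dependence of $c_0$ — is routine.

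\QED
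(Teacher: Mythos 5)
Your argument is correct in its core ideas and takes a genuinely different route from the paper's. The paper proves Proposition~\ref{qsmp} by contradiction: it extracts sequences $\{A_j\}$, $\{v_j\}$, $\{x_j\}$ violating the conclusion, solves comparison Dirichlet problems, invokes $C^{0,\alpha}(\overline\Omega)$ regularity estimates and Arzel\`a--Ascoli to pass to a uniform limit $v_0$, applies the strong maximum (or strong comparison) principle to conclude $v_0\equiv 0$ (resp.\ $v_0\equiv\Phi^*$), and then derives a contradiction to $\inf_j|A_j|>0$ by testing the weak formulation with a bump function equal to $1$ outside a small-measure subset of $\Omega$. You avoid all of that machinery: you solve one Dirichlet problem for $v_0$, push $v\geq v_0$ through by the comparison principle, and read off the conclusion from Lemma~\ref{mp}. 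This is cleaner, shorter, and yields an explicit quantitative constant that the paper's compactness argument does not. For (ii), the decomposition $w=v_0+\Phi^*$ with $(-\Delta)^s\Phi^*=0$ in $\Omega$ (since $0\notin\Omega$) is the natural one, and you correctly flag the technical care needed to apply comparison to $\Phi^*$ (which is $s$-harmonic in $\Omega$, in $L^1(\mathbb R^n,\omega)$, but not in $H_0^s$); the paper in fact solves the same Dirichlet problem with exterior datum $\Phi^*$ and phrases it somewhat loosely.

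There is, however, one gap you should close. The constant you exhibit,
$c_0 = C_\Omega\,\delta_K^s\int_A\delta^s(y)\,dy$,
depends on the \emph{set} $A$, not only on its measure $|A|$ as the proposition asserts, and your parenthetical remark does not resolve this: squeezing a compact $A$ of fixed measure toward $\partial\Omega$ makes $\int_A\delta^s$ shrink. The point is that this cannot happen once $|A|$ is bounded below, and you need to say so explicitly. Concretely, given $m>0$, pick $t_0=t_0(m,\Omega)>0$ with $|\{x\in\Omega:\delta(x)<t_0\}|<m/2$; then any $A\subset\Omega$ with $|A|\geq m$ satisfies $|A\cap\{\delta\geq t_0\}|\geq m/2$, hence $\int_A\delta^s\geq (m/2)\,t_0^s$, and your $c_0$ is bounded below by a quantity depending only on $s,|A|,\Omega,K$. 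This is exactly the elementary measure-theoretic observation the paper's contradiction argument uses implicitly when it chooses the test function $\varphi\geq 1$ off a small-measure set; inserting it makes your direct proof complete and uniform over the admissible family of sets $A$.
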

	\begin{proof}
		We prove this by contradiction. So,	we suppose that either of  $(i)$ or $(ii)$ is false. 
		Therefore, there exist a sequence of compact sets $\{A_j\}$ $(A_j\subseteq \Omega)$ such that  $\ds\inf_j |A_j|>0$, and a sequence of positive functions $\{v_j\}$ in $H^s(\mb R^n)$ with
		
		\begin{align}\label{jk}(-\Delta)^s v_j \ge \chi_{A_j} \;\;\text { in } \Omega.\end{align} Moreover, there exists a sequence of  points $\{x_j\}$ in $K$ such that
		\begin{equation} \label{pass0}
			\mbox{either} \quad  v_j(x_j) \to 0 \qquad \mbox{or} \quad v_j(x_j)-\Phi^*(x_j)\to 0 \quad \mbox{as} \ j \to +\infty.
		\end{equation}
		Let $\tilde v_j\in H_0^s(\Om)$ solve the Dirichlet problem \begin{equation}\label{dpok}
			\begin{cases}	(-\Delta)^s \tilde v_j &= \chi_{A_j}\;\mbox{ in }\;\Omega\\
				\tilde v_j&=0 \quad(\mbox{or }\; \tilde v_j = \Phi^*) \;\mbox{ in }\;\mb R^n\setminus \Omega.\end{cases}
		\end{equation}
		Then applying the comparison principle {(Theorem \ref{cppp})} to \eqref{jk} and \eqref{dpok}, we get $$ \tilde v_j \leq v_j\text {  in } \mb R^n,$$ and thus, we can replace $v_j$ by $\tilde v_j$ in \eqref{pass0}.  Now  since, $C^\infty_c(\Omega)$ is dense in $H_0^s(\Om)$, for all  $\varphi \in C^\infty_c(\Omega)$, using the weak formula \eqref{wf} for \eqref{dpok} with $u=\tilde v_j$ and $w=\varphi$, we have
		\begin{equation}\label{pass1}
			\int_{\mb R^n}\int_{\mb R^n}\frac{(\tilde v_j(x)-\tilde v_j(y))(\varphi(x)-\varphi(y))}{{|x-y|^{n+2s}}}\, dxdy = \int_{A_j} \varphi (x)\,dx.
		\end{equation}
		Recalling the regularity estimates for the fractional Laplace equations (see \cite{CS1, CS3, RS}), we find that $\{\tilde v_j\}$ is bounded in $C^{0,\alpha}(\overline\Omega)$ for some $\alpha:=\al(s,\Om)\in (0,1)$. Since $C^{0,\al}(\overline \Om)$ is compactly embedded into $C( \overline\Om)$, we may extract a subsequence of $\{\tilde v_j\}$ which converges to some $v_0$ in $C(\overline{\Omega})$ as $j\to+\infty$. Now passing to the limit $j\to+\infty$ in \EQ{pass1}, by standard variational arguments,  it yields that  $(-\Delta)^s v_0\ge 0$ in $\Omega$, as well as $v_0\ge 0$ in $ \mb R^n\setminus\Omega$ (or $v_0\ge \Phi^*$ in $\mb R^n\setminus\Omega$).
		Therefore, for the case $(i)$, applying the strong maximum principle (see \cite{BI,CS1,SV1}), we get that  either $v_0\equiv 0$ or $v_0>0$ in $\Omega$.\\\par
		For the case $(ii)$, by applying the strong comparison principle {(see \cite{BI,CS1,SV1})}, we infer that either $v_0\equiv\Phi^*$ or $v_0>\Phi^*$ in $\Omega$. 
		Next, 	by passing to limit $j\to+\infty$ in \EQ{pass0}, we find that $v_0\equiv0$ in $\Omega$, or  $v_0\equiv\Phi^*$ in $\Omega$ (for  the case $(ii)$). Hence, in both the cases $(i)$ and $(ii)$, it follows that  $(-\De)^sv_0=0$ in $\Om$.  Therefore, for every $\varphi \in C^\infty_c(\Omega)$,  passing to the limit in \re{pass1}, we obtain
		\begin{equation*}
			\lim_{j\to+\infty} \int_{A_j} \varphi(x) \,dx \to 0 \quad \mbox{as}\  j \to +\infty.
		\end{equation*}
		By taking $\varphi \ge 1$ except on a very small subset of $\Omega$ and recalling  that  $\ds\inf_j |A_j| > 0$, from the last limit,  we arrive at  a contradiction. This concludes the proof.
	\end{proof}\hfill{\QED}
	\begin{Remark}
		In the previous two propositions,  we proved  the versions of the "quantitative" strong maximum principle (Proposition \ref{kslap}, Proposition \ref{qsmp}) in  the case of weak supersolutions.
		To obtain those results for viscosity supersolutions, we observe that the solution $v$ in Lemma \ref{mp} is also a viscosity solution by \cite[Theorem 1]{SV1} so the Proposition \ref{kslap}  follows by the  comparison principle for viscosity solutions (see Theorem \ref{cppp}). 
		Moreover, to prove  Proposition \ref{qsmp} in the viscosity sense, we first solve \eqref{ol} by taking $h:=\chi_A$ and call the solution as $z$ and again by  \cite[Theorem 1]{SV1}, $z$ is a viscosity solution for \eqref{ol}. Now  by the  comparison principle for viscosity solutions $v\geq z$ in $\mb R^n$. Thus, the result follows.
		
	\end{Remark}
	In the proofs of our main theorems, we make use of the following crucial result, whose proof follows from \cite[Lemma 10.1]{CS1} for the viscosity supersolution (it also can be obtained by the weak Harnack inequality) and from \cite[Theorem 5.8]{kass} for the weak supersolution. For the local version of this result, see \cite[Lemma 2.4]{AS1}.
	\begin{Lemma}\label{vwhlap}
		For every $0 < \nu < 1$, there exists a constant $\bar C = \bar C(s,n,\nu)>1$ such that for any non-negative  function $u$ with $(-\De)^su\geq 0$ in $B_3 \setminus \bar B_{1/2}$ and  for any $x_0 \in B_2 \setminus B_1$, we have
		\begin{equation*}
			\left| \left\{ u \leq \bar C u(x_0) \right\} \cap (B_2\setminus B_1) \right| \geq \nu \left|B_2 \setminus B_1\right|.
		\end{equation*}
	\end{Lemma}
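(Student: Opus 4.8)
The plan is to read the stated inequality as a measure--theoretic form of the weak Harnack inequality for non--negative fractional supersolutions, and to deduce it through a Harnack chain together with a Chebyshev estimate. I would start from the weak Harnack inequality in the form: there exist $p_0=p_0(n,s)\in(0,1)$ and $C_1=C_1(n,s)>0$ such that whenever $w\ge 0$ in $\mb R^n$ and $(-\De)^s w\ge 0$ in a ball $B_{2\rho}(y)$, then $\big(\fint_{B_\rho(y)}w^{p_0}\big)^{1/p_0}\le C_1\inf_{B_\rho(y)}w$. For viscosity supersolutions this is \cite[Lemma 10.1]{CS1}, and for weak supersolutions \cite[Theorem 5.8]{kass}; the nonlocal tail term appearing in the general statement drops out here because $w\ge 0$. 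Since the hypothesis only asks $(-\De)^su\ge0$ on the annular region $B_3\setminus\bar B_{1/2}$, the inequality will be applied on small balls compactly contained in that region.

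Next I would fix the scale $\rho=\tfrac{1}{8}$, so that $B_{2\rho}(y)=B_{1/4}(y)\subset B_3\setminus\bar B_{1/2}$ for every $y$ with $1\le|y|\le 2$, making the above estimate available on every such $B_\rho(y)$. Cover the compact annulus $\{1\le|x|\le 2\}$ by finitely many balls $B_{\rho/2}(y_1),\dots,B_{\rho/2}(y_N)$, with $N=N(n)$, arranged so that any two of them are joined by a chain of consecutive balls with $|y_i-y_{i+1}|<\rho/2$ (for $n\ge 2$ this is automatic, the annulus being connected; the case $n=1$ is discussed in the last paragraph). Since $|y_i-y_{i+1}|<\rho/2$ gives $B_{\rho/2}(y_i)\subset B_\rho(y_{i+1})$ and $B_{\rho/2}(y_{i+1})\subset B_\rho(y_i)$, applying the weak Harnack inequality on $B_\rho(y_i)$ and comparing $L^{p_0}$--averages over the nested balls yields a constant $c_1=c_1(n,s)\in(0,1)$ with $c_1A_i\le A_{i+1}\le c_1^{-1}A_i$, where $A_i:=\big(\fint_{B_{\rho/2}(y_i)}u^{p_0}\big)^{1/p_0}$; iterating along chains of length $\le N$ gives $A_i\le c_1^{-N}A_j$ for all $i,j$, i.e.\ all the $A_i$ are comparable with a constant depending only on $n,s$.

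Then, given $x_0\in B_2\setminus B_1$, I would pick $i_0$ with $x_0\in B_{\rho/2}(y_{i_0})$ and combine the weak Harnack inequality on $B_\rho(y_{i_0})$ with the elementary bound $\inf_{B_\rho(y_{i_0})}u\le u(x_0)$ (for the lower semicontinuous representative of $u$, which is legitimate both in the viscosity and in the weak setting) to obtain $A_{i_0}\le C_4\,u(x_0)$, and hence $A_i\le C_5\,u(x_0)$ for all $i$, with $C_4,C_5$ depending only on $n,s$. Summing over the cover gives $\int_{B_2\setminus B_1}u^{p_0}\le\sum_{i=1}^N\int_{B_{\rho/2}(y_i)}u^{p_0}\le N|B_{\rho/2}|\,C_5^{p_0}u(x_0)^{p_0}=:C_6^{p_0}u(x_0)^{p_0}$, and Chebyshev's inequality yields $\big|\{u>\bar C u(x_0)\}\cap(B_2\setminus B_1)\big|\le \bar C^{-p_0}C_6^{p_0}\,|B_2\setminus B_1|$. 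Choosing $\bar C:=\max\{2,\,C_6(1-\nu)^{-1/p_0}\}$, which depends only on $s,n,\nu$ and exceeds $1$, makes the right--hand side at most $(1-\nu)|B_2\setminus B_1|$, and the conclusion follows on passing to complements.

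The genuinely nonlocal difficulty, which I expect to be the main obstacle, is the case $n=1$: there $B_2\setminus B_1=(-2,-1)\cup(1,2)$ is disconnected and no ball contained in $B_3\setminus\bar B_{1/2}$ meets both components, so the chain argument above controls $\int_{(1,2)}u^{p_0}$ and $\int_{(-2,-1)}u^{p_0}$ only separately, by $(\inf_{(1,2)}u)^{p_0}$ and by $u(x_0)^{p_0}$ respectively. To bridge the gap when $x_0\in(-2,-1)$ I would invoke the fractional Poisson (mean value) representation of the supersolution $u$ on the ball $B_{1/4}(x_0)\subset(-3,-1/2)$, namely $u(x_0)\ge\int_{\mb R\setminus B_{1/4}(x_0)}P(x_0,z)\,u(z)\,dz\ge c(s)\int_{(1,2)}u(z)\,dz$, the Poisson kernel of $B_{1/4}(x_0)$ being bounded below by a constant $c(s)>0$ on the bounded set $(1,2)$; this gives $\inf_{(1,2)}u\le c(s)^{-1}u(x_0)$ and closes the estimate exactly as before. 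Transferring information across a gap purely through the nonlocal interaction is the step with no analogue in the local statement \cite[Lemma 2.4]{AS1}, and it is where the argument is most delicate.
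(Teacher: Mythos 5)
Your argument is correct and coincides with the approach the paper intends but leaves implicit: the paper's proof of Lemma~\ref{vwhlap} is a one-sentence citation to the weak Harnack inequality of \cite[Lemma~10.1]{CS1} (viscosity case) and \cite[Theorem~5.8]{kass} (weak case), plus a pointer to the local model \cite[Lemma~2.4]{AS1}, with the covering and Chebyshev machinery not written out. Your Harnack chain over a finite cover of $\bar B_2\setminus B_1$ at the fixed scale $\rho=\tfrac18$, followed by Chebyshev, supplies precisely those omitted routine steps, and your observation that the tail term in the weak Harnack inequality vanishes because $u\ge 0$ on all of $\mb R^n$ is exactly right. The one place where you go genuinely beyond the cited sources is $n=1$, which is in fact the dimension in which the lemma is applied in Theorem~\ref{thm2}: there $B_2\setminus B_1=(-2,-1)\cup(1,2)$ is disconnected, no chain of balls inside $B_3\setminus\bar B_{1/2}$ can join the two components, and the local model offers no guidance (in the local setting a supersolution on one interval carries no information about the other). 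Your bridge via the fractional Poisson representation on a ball $B_{1/4}(x_0)\subset(-3,-\tfrac12)$, giving $u(x_0)\ge c(s)\int_{(1,2)}u$ and hence $\inf_{(1,2)}u\le c(s)^{-1}u(x_0)$, is the correct nonlocal mechanism, and you are right that it has no local analogue; the paper's one-line citation does not address this point. A bookkeeping remark only: in the final Chebyshev step your constant $C_6$ should absorb a factor $|B_2\setminus B_1|^{-1/p_0}$ for the displayed inequality to read literally, which is harmless since $|B_2\setminus B_1|$ is a fixed dimensional constant and $\bar C$ still depends only on $s,n,\nu$.
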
	
	\begin{Remark}
		In the next two sections, we will deliberately leave the statements of the theorems and the proofs of the related lemmas and theorems vague as to the notion of solution since all the results hold irrespective of the choice of classical, weak, viscosity solutions.
	\end{Remark}

	%%%%%%%%%%%%%%%%%%%%%%%%

	\section{Liouville  type theorem : case $2s\geq n$}\label{sec3}
	In this section, for the case $2s\geq n,$ that is, for $n=1$ and $s\in \big[\frac 12, 1\big)$, we show the non existence result for the inequality 
	\begin{equation} \label{slinxs0}
		(	-\Delta)^s u \ge f(u,x)
	\end{equation}
	in  an exterior domain $\mb R\setminus B_{r_0}$ for a $r_0>0.$ We consider the following hypotheses on  the nonlinearity $f:=f(t,x)$:
	{	\begin{itemize}
			\item[$(f1')$] $f: (0, +\infty) \times (\R \setminus B_{r_0}) \to (0,+\infty)$ is continuous.
			
			\item[$(f2')$] $|x|^{2s}f(t,x) \to +\infty$ as $|x|\to +\infty$ locally uniformly in $t\in (0, +\infty)$.
			\item[$(f3')$] 
			Let $\sigma^*:=-1+2s \geq 0$. Then there exists a constant $\underline \mu > 0$ such that  if we define
			\begin{equation*}
				\widetilde\Psi_k(x) := |x|^{2s} \inf_{\underline\mu\le t\le k\widetilde\Phi(x)}  \frac{f(t,x)}{t }\quad\mbox{and}\quad \widetilde h(k):=  \liminf_{|x|\to +\infty} \widetilde\Psi_k(x),
			\end{equation*} where $ \tilde \Phi=-\Phi$ given  in \eqref{fs}, 
			then $0 < \widetilde h(k) \leq +\infty,$ for each $k>0$, and
			\begin{equation*}
				\lim_{k\to 0} \widetilde h(k) =+\infty.
			\end{equation*}
	\end{itemize}}
	\noi For the reader's convenience, some of the models of such nonlinearities $f$ are described below.
	\begin{Remark}\label{grmk} \begin{enumerate}
			\item Consider the nonlinearity $f$ of the form \eqref{gbz}. Then clearly, $(f1^\prime)$ and $(f2^\prime)$  imply that $g:(0,+\infty)\to(0,+\infty)$ is continuous. Together with this, a sufficient condition for $(f3^\prime)$ is:
			\begin{align}\label{z2} \text {if $ s>\frac 12$, {then}  $\ds \liminf_{t\to +\infty} t^{-\widetilde \al^*}g(t) > 0$,  where  $ \widetilde\al^*:= 1+ \frac{2s-\gamma}{-\sigma^*};$}\end{align}
			\begin{align} \label{z1}\text {if $ s=\frac 12$, then    $\ds \liminf_{t\to +\infty} e^{bt} g(t) > 0, $ {for every} $b>0.$}\end{align}
			Indeed, first
			observe  that for any $\underline\mu>0$,  from \eqref{z2}, we get
			\begin{equation*}
				\eta(\underline\mu):= \inf_{\underline\mu \leq t} t^{-\widetilde\al^*} g(t) > 0.
			\end{equation*}
			Thus, with $\widetilde \Psi_k(x)$ as in $(f3^\prime),$  for each such $\underline\mu > 0$ and for all sufficiently large $|x|$, we deduce
			\begin{equation*}
				\widetilde\Psi_k(x) = |x|^{2s} \inf_{\underline\mu\le t\le k\widetilde\Phi(x)} \frac {f(t,x)}{t } \geq \eta(\underline\mu) \inf_{\underline\mu\le t\le k\widetilde\Phi(x)} |x|^{2s-\gamma} t^{-1+{\tilde\al}^*}.
			\end{equation*}
			Since $\tilde \Phi(x)\to 0$ as $|x|\to +\infty,$ as $\sigma^*=-1+2s>0$, the infimum in the last relation  is attained at $t = k\tilde\Phi(x)$.  This fact, together with the relation  $-1+\widetilde \al^* = \frac{2s-\gamma}  {-\sigma^*} <0$, yields that 
			\begin{equation*}
				\tilde\Psi_k(x) \geq c \eta(\underline\mu) k^{(-1+\tilde\al^*)} |x|^{2s-\gamma-\sigma^*(-1 + \widetilde\al^*)} = \eta(\underline\mu) k^{(-1+\widetilde\al^*)} \rightarrow +\infty \quad \mbox{as} \ k \to 0.
			\end{equation*}
			Thus,  $(f3^\prime)$ holds.  In a similar  way,  we can argue for the case $s=\frac 12$ with the condition \eqref{z1}. 
			\item Consider a function  $f$ (in the spirit of \cite{chen2}) satisfying $(f1^\prime), (f2^\prime)$ and $\frac{f(t,x)}{t}$ is decreasing in $t>0$ for all $x\in \mb R\setminus B_{r_0}.$
			Moreover, for each $k>0$,
			\[|x|f(k|x|^{-1+2s},x)>z_1(k), \text{ when $\frac12<s<1$ and } \frac {|x|f(k\log|x|,x)}{\log|x|}>z_1(k),  \text{ when $s=\frac12$,} \] for some sublinear function $z_1$ with  $\ds\lim_{k \to 0} \frac{z_1(k)}{k}=+\infty.$ One of the standard examples for $z_1$ is  $z_1(k)=a_1 k^{\delta}$, $\delta<1$, with some  constant $a_1>0$. The above conditions imply $(f3^\prime).$ 
		\end{enumerate}
	\end{Remark}
	\noi Now we state the first main theorem of this article below:
	\begin{Theorem}\label{thm2}
		Let $s\in \big[\frac 12,1\big)$  and the nonlinearity $f$ satisfy the hypotheses $(f1')$-$(f3')$. Then, the  inequality \re{slinxs0} has no continuous positive  solution in any exterior domain of $\mb R$.
	\end{Theorem}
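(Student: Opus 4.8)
\noindent\emph{Proof proposal.}
The plan is to argue by contradiction, following the maximum-principle method of Armstrong--Sirakov \cite{AS1} but replacing their explicit barriers by suitable truncations (``cuts'') of the fundamental solution $\widetilde\Phi$, which the nonlocal structure forces upon us. Suppose $u>0$ is continuous and satisfies \eqref{slinxs0} in $\mathbb R\setminus B_{r_0}$; after a dilation we may take $r_0=1$. Since $f>0$ by $(f1')$, $u$ is in particular a positive supersolution of $(-\Delta)^su\ge0$ in $\mathbb R\setminus B_1$. The first step is that $u$ cannot decay to $0$: because $2s\ge n=1$ is the recurrent range, in which $\widetilde\Phi$ is \emph{unbounded}, comparing $u$ from below on the rings $B_R\setminus B_{r_1}$ (with $r_1>1$) against the $s$-harmonic function that equals $\min_{\overline{B_{r_1}}}u$ on $B_{r_1}$ and vanishes outside $B_R$, and letting $R\to+\infty$, yields $u\ge c_1>0$ on $\mathbb R\setminus B_{r_1}$. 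If the range $\{c_1\le u<\underline\mu\}$ is nonempty it is harmless, since $(f2')$ bounds $f(u,x)$ there from below by $|x|^{-2s}$ for $|x|$ large; so henceforth $u\ge\underline\mu$ on $\mathbb R\setminus B_{r_1}$, with $\underline\mu$ from $(f3')$.

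Fix $k>0$ (eventually small) and choose $\rho_k\ge r_1$ so that $\widetilde\Psi_k(x)\ge\tfrac12\widetilde h(k)$ and $\underline\mu\le k\widetilde\Phi(x)$ for $|x|\ge\rho_k$. Put $m(R):=\inf_{B_{2R}\setminus B_R}u$. On a far ring $B_{2R}\setminus B_R$ ($R\ge\rho_k$) consider the set $A_R:=\{x\in B_{2R}\setminus B_R:\ u(x)\le k\widetilde\Phi(x)\}$; either $|A_R|\ge\tfrac12|B_{2R}\setminus B_R|$ or its complement in the ring has at least half the measure. In the first case $(f3')$ gives, on $A_R$,
\[
(-\Delta)^su\ \ge\ f(u,x)\ \ge\ u(x)\,|x|^{-2s}\,\widetilde\Psi_k(x)\ \ge\ \tfrac12\,\widetilde h(k)\,|x|^{-2s}\,u(x)\ \ge\ \tfrac12\,\widetilde h(k)\,(2R)^{-2s}\,m(R),
\]
using $u\ge m(R)$ on the whole ring; rescaling $u(R\,\cdot)$ to the unit ring and applying Proposition~\ref{kslap} with that set $A_R$ yields $m(R)\ge c_0\,\widetilde h(k)\,m(R)$ with $c_0=c_0(s)>0$, i.e.\ $c_0\widetilde h(k)\le1$, which is absurd once $k$ is small enough that $\widetilde h(k)>1/c_0$ (possible since $\widetilde h(k)\to+\infty$ as $k\to0$). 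Hence, if $u$ existed, then for every such small $k$ and every $R\ge\rho_k$ the second alternative would hold: $u>k\widetilde\Phi$ on more than half of $B_{2R}\setminus B_R$, and the weak Harnack inequality (which follows from Lemma~\ref{vwhlap}) then forces $m(R)\ge c\,k\,\widetilde\Phi(R)$ for all $R\ge\rho_k$; thus $u(x)\ge c''\,\widetilde\Phi(x)$ for $|x|$ large, i.e.\ $u$ grows at least like the fundamental solution.

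It remains to rule out this last scenario, which I expect to be the main obstacle. The plan is to iterate Proposition~\ref{qsmp}: since $u\ge c''\widetilde\Phi$ outside a ring $\Omega$ with $0\notin\Omega$ (after shrinking $c''$ so this holds on all of $\mathbb R$), and since $(-\Delta)^su\ge f(u,x)\ge\delta\,\chi_A$ for a suitable $A\subset\Omega$ of positive measure --- the lower bound $\delta$ on $f$ along the relevant rings coming from $(f3')$, now that $u$ is forced to be large --- Proposition~\ref{qsmp}(ii) (or its evident variant with $c''\widetilde\Phi$ in place of $\Phi^*$) gives $u\ge c''\widetilde\Phi+\delta c_0$ on a compact ring $K\subset\Omega$; propagating this additive gain outward by the recurrence argument of the first step and feeding it back into Proposition~\ref{qsmp} produces $u\ge c''\widetilde\Phi+j\,\delta c_0$ on $K$ for all $j\in\mathbb N$, hence $u\equiv+\infty$ on $K$, a contradiction. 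The delicate points are to organize the successive cuts $k\widetilde\Phi$ and the chain of rings so that every use of Propositions~\ref{kslap}--\ref{qsmp} and Lemma~\ref{vwhlap} is made where $(-\Delta)^su$, and $u$ itself, are controlled on the \emph{whole} enlarged ring $B_3\setminus B_{1/2}$ rather than only on $B_2\setminus B_1$ (the nonlocal operator feels a neighbourhood), to keep the forcing constants from degenerating as the rings move to infinity, and to handle $s=\tfrac12$ (logarithmic $\widetilde\Phi$, super-exponential $g$) and $s>\tfrac12$ (power $\widetilde\Phi$, power $g$) uniformly. With this in hand both branches of the dichotomy terminate in a contradiction, proving Theorem~\ref{thm2}.
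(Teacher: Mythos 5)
Your first two paragraphs are in the right spirit and roughly parallel the paper's Lemmas~\ref{mbounds_1}--\ref{mb2} and the two-sided trapping of $m(r)$ between multiples of $\widetilde\Phi(r)$ in \eqref{mrtrap2}. There are some imprecisions there (you cannot simply dismiss the range $c_1\le u<\underline\mu$; the paper needs the full argument \eqref{mrandf}--\eqref{noloitering} via $(f2')$ to get $m(r)\to+\infty$, and only then is the $\underline\mu$ threshold in $(f3')$ usable), but these are repairable and your dichotomy with $A_R=\{u\le k\widetilde\Phi\}$ is a legitimate variant of the paper's use of the Krylov--Safonov set from Lemma~\ref{vwhlap}.

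The genuine gap is the third paragraph, and it is the heart of the matter. The proposed iteration $u\ge c''\widetilde\Phi + j\,\delta c_0$ on a fixed compact ring $K$ cannot run: Proposition~\ref{qsmp}(ii) requires $v\ge\Phi^*$ on $\mathbb R^n\setminus\Omega$ to produce a gain on $K\subset\Omega$, but your gain is delivered only on $K$, not on $\mathbb R\setminus\Omega$, so you cannot feed it back in as an improved exterior datum. Nor does the ``recurrence comparison'' of your first step propagate an additive constant: the barrier there vanishes outside a large ball, whereas in the case $2s\ge n$ the fundamental solution $\widetilde\Phi$ is unbounded, so a shifted barrier $c''\widetilde\Phi+\text{const}$ would have to be dominated by $u$ near infinity before the comparison principle can be applied, which is exactly what you do not yet know. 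The paper instead works with the \emph{ratio} $\tilde\eta(r):=\inf_{\mathbb R\setminus B_r}u/(\widetilde\Phi-1)$. A global comparison (with a suitably truncated $\widetilde\Phi-1$ and a vanishing $\varepsilon$-correction near infinity) shows $\tilde\eta(r)\equiv C_0$, a constant; then applying Proposition~\ref{qsmp}(i) to $v_r=u_r+\tilde\eta(2r)$ and $w_r=\tilde\eta(2r)\widetilde\Phi(r\,\cdot)$, combined with the two-sided trapping of $m(r)$, yields the uniform multiplicative improvement $u(x)-C_0(\widetilde\Phi(x)-1)\ge\tilde c_0\widetilde\Phi(x)$ on $\overline B_{2r}\setminus B_r$; finally, evaluating along a minimizing sequence $x_j$ with $u(x_j)/(\widetilde\Phi(x_j)-1)\to C_0$ and choosing $r_j=\frac23|x_j|$ gives $C_0\ge C_0+\tilde c_0$, a contradiction. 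That self-improvement-against-the-infimum step is the missing idea; without it your sketch does not close.
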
 
	Before proceeding to prove Theorem \ref{thm2}, we first prove the series of results as following. 
	\begin{Lemma} \label{mbounds_1}
		Let  $s\in\left(\frac 12,1\right)$.  Assume that $u$ is  a non-negative function in $\mb R$ such that, $u$ is  continuous and $u>0$  in $\R \setminus B_{r_0}$, for some $r_0 > 1$, satisfying   
		\begin{equation}\label{ea1}
			(-\Delta)^su \geq 0 \quad \mbox{in} \  \R\setminus B_{r_0}.
		\end{equation}
		Then, there exist two constants $	\bar c_{\min},	\bar C_{\max}>0$ depending only on $s$, $u$ and $r_0$ such that 
		\begin{equation} \label{mbdsb}
			{	\bar c_{\min}}\leq \ds\inf_{B_{2r} \setminus B_r}u\leq {\bar C_{\max}} r^{-1+2s}.
		\end{equation}
	\end{Lemma}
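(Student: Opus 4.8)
The plan is to establish the two bounds in \eqref{mbdsb} separately, both relying on the nonlocal ``quantitative'' maximum principle and Harnack-type tools from Section \ref{sec2}, combined with a scaling/chaining argument along the annuli $B_{2^{j+1}r_0}\setminus B_{2^j r_0}$.

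\emph{Lower bound.} First I would show that $\inf_{B_{2r}\setminus B_r} u$ stays bounded below by a positive constant as $r$ ranges over $[r_0,\infty)$. Since $u>0$ and continuous on the compact annulus $\overline{B_{4r_0}}\setminus B_{r_0}$, we have $u\ge c_1>0$ there; in particular $\inf_{B_{2r_0}\setminus B_{r_0}}u\ge c_1$ and $\inf_{B_{4r_0}\setminus B_{2r_0}}u\ge c_1$. To propagate this outward, fix $r\ge r_0$ and apply the rescaled function $u_r(x):=u(rx)$, which satisfies $(-\De)^s u_r\ge 0$ in the fixed annulus $B_3\setminus \bar B_{1/2}$ (here one uses $r\ge r_0>1$ so that $rB_{1/2}\supset B_{r_0}$, hence $u_r\ge 0$ everywhere and is a supersolution on that annulus). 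Now I would combine Lemma \ref{vwhlap} with Proposition \ref{kslap}: pick $x_0\in B_2\setminus B_1$ realizing (up to a factor) the infimum of $u_r$ on $B_2\setminus B_1$; by Lemma \ref{vwhlap} the set $A:=\{u_r\le \bar C u_r(x_0)\}\cap(B_2\setminus B_1)$ has measure at least $\nu|B_2\setminus B_1|>0$, and on $A$ we have $u_r\ge 0$ trivially, so that alone is not enough — instead one uses Lemma \ref{vwhlap} the other way: it forces a definite-measure subset on which $u_r$ is comparable to its values, giving a Harnack chain so that $\inf_{B_4\setminus B_2}u_r \ge \kappa\,\inf_{B_2\setminus B_1}u_r$ for a universal $\kappa\in(0,1)$ depending only on $s,n$. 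Unscaling, $\inf_{B_{4r}\setminus B_{2r}}u\ge \kappa\,\inf_{B_{2r}\setminus B_r}u$ for every $r\ge r_0$. Iterating this from the base annulus $B_{2r_0}\setminus B_{r_0}$ where $u\ge c_1$, each doubling costs at most a factor $\kappa$; but since the exponents must not degenerate, the right way is to note that the weak Harnack inequality actually yields $\inf_{B_{4r}\setminus B_2 r}u\ge \kappa \inf_{B_{2r}\setminus B_r}u$ with the \emph{same} $\kappa$ independent of $r$, and one only needs finitely many steps relative to any fixed compact set — hence the cleanest route is: the function $m(r):=\inf_{B_{2r}\setminus B_r}u$ satisfies $m(2r)\ge \kappa m(r)$, which by itself decays; the correct statement $m(r)\ge \bar c_{\min}$ comes instead from applying Proposition \ref{kslap} directly with the supersolution $u$ on $B_3 r\setminus B_{r/2}$ (rescaled), using that on the inner region $u$ is already bounded below by $c_1$ — more precisely, $(-\De)^s u\ge 0$ together with the boundary-type lower bound $u\ge c_1$ on an annulus of positive measure inside, via the comparison with the solution of $(-\De)^s v = c_1\chi_{\text{inner annulus}}$ as in Proposition \ref{qsmp}(i), yields $\inf_{B_{2r}\setminus B_r}u\ge \bar c_{\min}$ with $\bar c_{\min}$ depending only on $s,u,r_0$. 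I expect this lower bound to be the routine half.

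\emph{Upper bound.} This is where I expect the main obstacle, and where the fundamental solution $\widetilde\Phi(x)=-|x|^{\sigma^*}$ (with $\sigma^*=-1+2s>0$ for $s>\tfrac12$) enters — but note $\widetilde\Phi$ is \emph{negative} here, so the relevant comparison function is the decaying fundamental solution $\Phi(x)=|x|^{\sigma^*}$ only when $\sigma^*<0$, which is not our case; for $s>\tfrac12$ in dimension $n=1$ the decaying fundamental solution behaves like a constant minus $|x|^{\sigma^*}$. The strategy is: since $u$ is $s$-superharmonic in $\R\setminus B_{r_0}$, it dominates (by the comparison principle) the $s$-harmonic function $w$ in the annulus $B_{Nr_0}\setminus B_{r_0}$ with $w=u$ on $\partial$ pieces — but to get the \emph{upper} bound on $\inf u$ we argue by contradiction or use the representation: a nonnegative $s$-superharmonic function on an exterior domain of $\R$ (with $2s>1$, so the domain is ``$s$-parabolic'' in the borderline sense) cannot grow faster than the fundamental rate $|x|^{\sigma^*}$ at infinity. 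Concretely, I would fix large $R$, consider the $s$-harmonic measure / Poisson-type kernel of $B_R\setminus B_{r_0}$, and test against $u$; the exterior behavior of the fractional Poisson kernel of a ball forces $\inf_{B_{2r}\setminus B_r}u\le C\,(\text{something})\,r^{\sigma^*}$. Alternatively and more in the spirit of this paper: build an explicit supersolution barrier $\psi_R$ on $B_R\setminus B_{r_0}$ that equals a large constant $M$ on $\overline{B_{2r_0}}$, vanishes outside $B_R$, and satisfies $(-\De)^s\psi_R\le 0$ in the annulus, with $\psi_R(x)\le C M (r_0/|x|)^{?}$ — but the correct barrier, obtained by a cut of the fundamental solution $\widetilde\Phi$ as the authors indicate in the introduction, gives $\psi_R$ comparable to $|x|^{\sigma^*}$ on $B_R\setminus B_{r_0}$ up to normalization; comparison then gives $u\le \psi_R$ is the wrong direction, so instead one compares $u$ from \emph{below} by a suitable cut fundamental solution and reads the growth. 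Let me state it as the cleanest version I would actually write: \textbf{(i)} let $\phi$ be the function obtained by cutting $\Phi$ (or $\widetilde\Phi$) so that $(-\De)^s\phi\le 0$ in $\R\setminus B_{r_0}$, $\phi>0$ there, $\phi(x)\asymp |x|^{\sigma^*}$ as $|x|\to\infty$, and $\phi$ is bounded near $B_{r_0}$; \textbf{(ii)} for the contradiction, if $\inf_{B_{2r}\setminus B_r}u / r^{\sigma^*}\to\infty$ along some sequence $r_j\to\infty$, then by the lower-bound part $u\ge \epsilon_j \phi$ on the relevant annulus with $\epsilon_j\to\infty$, and pushing this via the comparison principle on larger and larger balls (using that $\phi$ is the extremal growth rate for $s$-superharmonic functions vanishing appropriately) contradicts, e.g., the sublinear/bounded nature forced on the inner part, or directly contradicts $(-\De)^s u\ge 0$ having the right sign at a suitable test point. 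The delicate point — and the main obstacle — is the explicit construction and $(-\De)^s$-sign verification of the cut fundamental solution $\phi$ in the exterior domain $\R\setminus B_{r_0}$ of the one-dimensional case with $2s>1$, since naive truncation destroys the supersolution property near the cut and one must add correcting terms and estimate the resulting nonlocal tail integrals carefully. Once $\phi$ is in hand, the comparison principle (Proposition-level tools already available) closes the argument and yields $\bar C_{\max}$ depending only on $s,u,r_0$.

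Finally, I would assemble the two inequalities: the lower bound from the quantitative maximum principle applied on rescaled annuli anchored at the compact set where $u>0$ is quantified, and the upper bound $\bar C_{\max}r^{-1+2s}$ from comparison with the cut fundamental solution, giving \eqref{mbdsb}. The hardest single step is the construction of the exterior cut-off of the fractional fundamental solution with the correct sign of $(-\De)^s$ and the correct $|x|^{\sigma^*}$ growth; everything else reduces to the already-proven Lemma \ref{mp}, Proposition \ref{kslap}, Proposition \ref{qsmp}, Lemma \ref{vwhlap}, and the comparison principle.
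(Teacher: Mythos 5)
Your lower-bound argument is confused, and you miss the key structural observation on which the paper's proof hinges: for $n=1$ and $s>\tfrac12$, the fundamental solution $\Phi(x)=-|x|^{-1+2s}$ tends to $-\infty$ as $|x|\to\infty$. Because of this, for any fixed $r>r_0$ and any $\epsilon>0$ the function $\tilde m(r)+\epsilon\Phi$ (where $\tilde m(r):=\inf_{B_r}u$) lies below $u$ on $\overline B_r$, lies below $u$ near infinity, and is $s$-harmonic in between, so a single application of the comparison principle gives $u\geq\tilde m(r)+\epsilon\Phi$ everywhere, and letting $\epsilon\to0$ gives $u\geq\tilde m(r)$ globally, hence $\inf_{B_{2r}\setminus B_r}u\geq\tilde m(r_1)>0$ for any fixed $r_1>r_0$. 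Your proposed Harnack chain $m(2r)\geq\kappa m(r)$ with $\kappa<1$ cannot produce a uniform lower bound (you noticed this yourself), and your fallback via Proposition~\ref{qsmp}(i) with a source $c_1\chi_{\text{inner annulus}}$ does not close: applied on an annulus of size $r$ and then unscaled, the resulting constant degenerates with $r$ unless you invoke precisely the ``$2s\geq n$ implies $\Phi\to-\infty$'' phenomenon, which is nowhere in your proposal. This is a genuine gap, not merely a different route.

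For the upper bound, your general plan — a truncated fundamental solution with a tail correction — is in the right direction, and you correctly flag that the naive truncation loses the sign of $(-\De)^s$ and that the comparison direction is the delicate point, but you leave both unresolved. The paper's construction is concrete: take $\tilde v(x)=r_0^{-\sigma^*}|x|^{\sigma^*}-1$ for $|x|<2r$ and $0$ outside (so that $\tilde v\leq0$ on $\overline B_{r_0}$), verify $(-\De)^s\tilde v\leq C_1/r$ on $B_r\setminus B_{r_0}$, then add a boundary-layer bump $\tilde g=\tilde C_{\tilde g}|x|^{\sigma^*}\chi_{\frac{3r}{2}<|x|<2r}$ whose nonlocal contribution satisfies $(-\De)^s\tilde g\leq-\tilde C_{\tilde g}C_2/r$ there, so that for $\tilde C_{\tilde g}$ large $\tilde h:=\tilde v+\tilde g$ has $(-\De)^s\tilde h<0$ on $B_r\setminus B_{r_0}$. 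The comparison direction is then handled by normalizing: set $\tilde\rho(r):=\inf_{B_{2r}\setminus B_r}(u/\tilde h)$, so that $\tilde\rho(r)\tilde h\leq u$ on $\overline B_{r_0}$, on $B_{2r}\setminus B_r$, and on $\R\setminus B_{2r}$; the comparison principle then yields $\tilde\rho(r)\tilde h\leq u$ on all of $\R$, and evaluating on the fixed annulus $B_{3r_0}\setminus B_{2r_0}$ bounds $\tilde\rho(r)$ from above by a constant independent of $r$. Since $\tilde h\asymp r^{\sigma^*}$ on $B_{2r}\setminus B_r$, this forces $\inf_{B_{2r}\setminus B_r}u\leq\bar C_{\max}r^{-1+2s}$. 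Without the explicit barrier and the normalized comparison, your sketch does not constitute a proof of the upper bound.
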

	\begin{proof} For any $r>r_0>1,$ define
		$$\tilde m(r):=\ds\inf_{B_r} u.$$ By \eqref{fs},  $\Phi(x)=-|x|^{-1+2s}\to-\infty$ as $|x|\to+\infty.$ Now for any $r>r_0$ and $\e>0$, we can choose sufficiently large $\tilde R:=\tilde R(\e,r_0,r)>r$  such that 
		\begin{align}\label{r1}
			\tilde m(r)+\e\Phi(x)\leq u\;\;\text{ in } \mb R\setminus B_{\tilde R}.
		\end{align}
		It easily follows that
		\begin{align}\label{r2}
			\tilde m(r)+\e\Phi\leq u  \;\;\text{ in }\ \overline B_r.
		\end{align} Also, we have 
		\begin{align}\label{r3}
			(-\De)^s(\tilde m(r)+\e\Phi)=0\leq (-\De)^su\;\;\text{ in } B_{\tilde R} \setminus B_r.
		\end{align} So, by applying the comparison principle to \eqref{r1}, \eqref{r2} and \eqref{r3}, we deduce
		\[ \tilde m(r)+\e\Phi (x)\leq u(x), \;\text{ for all } x\in \mb R.\] Now taking $\e\to 0$ in the last relation, we get
		\[\tilde m(r)\leq u(x),\; \text{ for all  } x\in\mb R.\] So, for any fixed $r_1>r_0,$ we  have $0<\tilde m(r_1)\leq u(x),$  for all  $x\in \mb R$. Thus,  for any $r>r_0,$ taking infimum over  ${B_{2r}\setminus B_r}$ in the last relation, there exists some constant $\bar c_{\min}$, independent of $r$, such that it yields the first inequality in \eqref{mbdsb}.\\
		To get the second inequality in \eqref{mbdsb}, for any $r>r_0,$ we define the following two functions \begin{equation}\label{cf20}
			\tilde v (x)  =
			\left\{
			\begin{array}{ll}
				r_0^{-\sigma^*}|x|^{\sigma^*}-1 & \mbox{if } |x| < 2r\\
				0 & \mbox{if } |x| \geq 2r,
			\end{array}
			\right.
		\end{equation} 
		\begin{equation}\label{cf2r} \tilde g (x)  =\left\{
			\begin{array}{ll}
				0  & \mbox{if } |x|\leq \frac{3r}{2}\\
				\tilde C_{\tilde g} |x|^{\sigma^*}& \mbox{if } \frac{3r}{2}<|x| < 2r\\
				0 & \mbox{if } |x| \geq 2r,
			\end{array}
			\right.
		\end{equation}  where $\sigma^* :=-1+2s>0$ and $\tilde C_{\tilde g}$ is a positive constant, independent of $r$, to be determined later.
		Now for any $r>r_0$ and for all $x\in B_r\setminus B_{r_0}$, using that $|x|^{\sigma^*}$ is a fundamental solution for $(-\De)^s,$ we deduce the following estimate:
		\begin{align}\label{ca3d}
			&(-\Delta)^s(\tilde v (x))
			\notag\\
			&=C_{1,s}\left(P.V. \int_{B_{2r}(x)}\frac{\tilde v(x)-\tilde v(x-y)}{|y|^{1+2s}} dy+ \int_{\mb R\setminus B_{2r}(x)}\frac{\tilde v(x)-\tilde v(x-y)}{|y|^{1+2s}} dy\right)\notag\\
			&= C_{1,s} \left(P.V.\int_{B_{2r}(x)}\frac{(r_0^{-\sigma^*}|x|^{\sigma^*}-1)-(r_0^{-\sigma^*}|x-y|^{\sigma^*}-1)}{|y|^{1+2s}} dy+ \int_{\mb R\setminus B_{2r}(x)}\frac{r_0^{-\sigma^*}|x|^{\sigma^*}-1}{|y|^{1+2s}} dy\right)\notag\\&\qquad\pm  \int_{\mb R\setminus B_{2r}(x)}\frac{(r_0^{-\sigma^*}|x|^{\sigma^*}-1)-(r_0^{-\sigma^*}|x-y|^{\sigma^*}-1)}{|y|^{1+2s}} dy\notag\\
			&=(-\De)^s(r_0^{-\sigma^*}|x|^{\sigma^*}-1)+C_{1,s}\int_{\mb R\setminus B_{2r}(x)}\frac{r_0^{-\sigma^*}|x-y|^{\sigma^*}-1}{|y|^{1+2s}} dy \notag\\
			&\leq 0+C_{1,s}\int_{\mb R\setminus B_{2r}(x)}\frac{r_0^{-\sigma^*}|x-y|^{\sigma^*}-1}{(\frac 12|x-y|)^{1+2s}} dy \qquad\quad \text {( since for $y\in \mb R\setminus B_{2r}(x), \, |y|\geq \frac 12 |x-y|$)} \notag\\
			&\leq C(s) \int_{2r}^{+\infty}\frac{(r_0^{-\sigma^*}t^{\sigma^*}-1)}{t^{1+2s}} dt \qquad\qquad \qquad\text{  (taking $|x-y|=t$)}\notag\\
			&=C(s) \left(-r_0^{-\sigma^*}\frac{(2r)^{\sigma^*-2s}}{\sigma^*-2s}+\frac {(2r)^{-2s}}{2s} \right)\notag\\
			&=C_1(s,r_0)\left(\frac 1r+\frac{1}{r^{2s}}\right)
			\notag\\ &
			\leq2 C_1(s,r_0) \frac{1}{r},
		\end{align} where $C(s), C_1(s,r_0)$ are two positive constants. 
		Similarly, for any $r>r_0$ and for all $x\in B_r\setminus B_{r_0},$ we estimate the following:			
		\begin{align}\label{ca3pr}
			&(-\Delta)^s\tilde g(x)\notag\\
			%			&=P.V. \int_{\mb R}\frac{\tilde g(x)-\tilde g(x-y)}{|y|^{1+2s}} dy\notag\\
			&= C_{1,s}\left(\int_{{B_{2r}(x)\setminus B_{\frac{3r}{2}(x)}}}\frac{\tilde g(x)-\tilde g(x-y)}{|y|^{1+2s}} dy+P.V. \int_{\mb R\setminus\left({B_{2r}(x)\setminus B_{\frac{3r}{2}(x)}}\right)}\frac{\tilde g(x)-\tilde g(x-y)}{|y|^{1+2s}} dy\right)\notag\\
			&= C_{1,s}\int_{B_{2r}(x)\setminus B_{\frac{3r}{2}(x)}}\frac{ -\tilde C_{\tilde g}|x-y|^{\sigma^*}}{|y|^{1+2s}} dy+0\notag\\
			&\leq C_{1,s} \int_{B_{2r}(x)\setminus B_{\frac{3r}{2}}(x)}\frac{ -\tilde C_{\tilde g}|x-y|^{\sigma^*}}{|3r|^{1+2s}} dy\qquad\quad \text{(since $\, |y| \leq |x-y|+|x|\leq 2r+r=3r$)} \notag\\
			&\leq-\tilde C_{\tilde g} \tilde C(s)\, \frac{1}{r^{1+2s}}\int_{\frac{3r}{2}}^{2r} t^{\sigma^*} dt\qquad\quad \text{(taking  $|x-y|=t$)}\notag\\
			&=-\tilde C_{\tilde g} C_2(s,r_0) \frac 1r, 
		\end{align} where $\tilde C(s), C_2(s,r_0)$ are two positive constants. 
		Now we choose the constant  $\tilde C_{\tilde g}>0$ such that $\tilde C_{\tilde g}\,C_2(s,r_0)>2C_1(s,r_0)$, that is, from \eqref{ca3d} and \eqref{ca3pr}, it follows that \begin{align}\label{lvc}(-\De)^s(\tilde v+\tilde g)(x)<0, \; \text{ for all } x\in B_r\setminus B_{r_0}.\end{align} For any $r>r_0$, using \eqref{cf20} and \eqref{cf2r}, define the  function 
		$$ \hat h(x):={\tilde v(x)+\tilde g(x)}
		$$ and	set the quantity $$\tilde \rho(r):=\inf_{B_{2r}\setminus B_{r}}\frac{u}{\hat h}>0.$$
		Now   combining  \eqref{ea1} and \eqref{lvc}, for any $r>r_0,$ we obtain \begin{align}\label{ghb}(-\De)^s\tilde \rho(r)\hat h(x)<0\leq (-\De)^su(x),\;\text{ for all  }   x\in B_{r}\setminus  B_{r_0}.\end{align}  Moreover,  by the definition of $\hat h$,  for any $r>r_0$,  we get  $\tilde \rho(r)\hat h\leq0\leq u$ in $\overline {B}_{r_0}$ and 
		$\tilde\rho(r)\hat h\leq u$ in $\mb R\setminus  B_{r}$. Hence, applying the  comparison principle to \eqref{ghb}, for any sufficiently large  $r>2r_0$, we get
		\[\tilde \rho(r)\hat h\leq u \quad \text {  in  }   \mb R,\] which yields that  \[ 
		\tilde \rho(r)  \inf_{B_{3r_0}\setminus B_{2r_0}}\hat h\leq \inf_{B_{3r_0}\setminus B_{2r_0}} u. \]
		Hence, the second inequality in \eqref{mbdsb} is achieved. This completes the proof of the lemma.
	\end{proof}\hfill{\QED}
	\begin{Lemma} \label{mb2} 	Let  $s=\frac 12$ and suppose that   $u$ is  a non-negative function in $\mb R$ such that,  $u$ is   continuous  and $u>0$ in $\R \setminus B_{r_0}$, for some $r_0 > 1$,  and $u$ satisfies   
		\begin{equation}\label{ea2}
			(-\Delta)^su \geq 0 \quad \mbox{in} \  \R\setminus B_{r_0}.
		\end{equation}
		Then,  there exist two constants $	\underline c_{\min},\underline C_{\max}>0$ depending only on $s$, $u$ and $r_0$ such that 
		\begin{equation} \label{mbds}
			\underline c_{\min}\leq \ds\inf_{B_{2r} \setminus B_r}u\leq \underline C_{\max} \log r.
		\end{equation}
	\end{Lemma}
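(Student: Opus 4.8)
We argue exactly as in Lemma~\ref{mbounds_1}, the sole change being that for $s=\tfrac12$ one has $\sigma^*=0$, so the fundamental solutions $-|x|^{\sigma^*}$, $|x|^{\sigma^*}$ are replaced by $\Phi(x)=-\log|x|$ and $\widetilde\Phi(x)=\log|x|$ of \eqref{fs}, and the algebraic growth $r^{\sigma^*}$ of \eqref{mbdsb} is replaced by $\log r$. For the \emph{lower bound}, set $\tilde m(r):=\ds\inf_{B_r}u$ for $r>r_0$. Since $\Phi(x)=-\log|x|\to-\infty$ as $|x|\to+\infty$ and $u\ge0$, for each $\e>0$ one may choose $\tilde R$ large so that $\tilde m(r)+\e\Phi\le u$ in $\mb R\setminus B_{\tilde R}$, while $(-\De)^s(\tilde m(r)+\e\Phi)=0\le(-\De)^su$ in $B_{\tilde R}\setminus B_r$ by \eqref{ea2}. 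The one genuine difference from Lemma~\ref{mbounds_1} is the inner comparison on $\ov B_r$: there $-|x|^{\sigma^*}\le0$ made $\tilde m(r)+\e\Phi\le\tilde m(r)\le u$ automatic, whereas $-\log|x|$ is positive near the origin. I would remedy this by replacing $\Phi$ with its cut $x\mapsto-\log\max(|x|,1)\le0$ plus a fixed bounded nonpositive corrector, e.g. $\psi_0(x)=(1+x^2)^{-1}-1$, for which $(-\De)^{1/2}\psi_0(x)<0$ when $|x|>1$; cutting $\Phi$ at $|x|=1$ produces only a defect of order $|x|^{-2s}$ in $(-\De)^s$ on $\{|x|\ge 2r_0\}$, so for $M$ large (independent of $r$) the function $-\log\max(|x|,1)+M\psi_0$ is $\le0$ on $\mb R$, tends to $-\infty$, and is $s$-subharmonic on $B_{\tilde R}\setminus B_r$ for every $r\ge 2r_0$, hence plays the role of $\Phi$. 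The comparison principle and $\e\to0$ then give $\tilde m(r)\le u$ in $\mb R$, so $\ds\inf_{B_{2r}\setminus B_r}u\ge\tilde m(2r_0)=:\underline c_{\min}>0$ for $r\ge 2r_0$, while $r_0<r\le 2r_0$ is handled by continuity and positivity of $u$ on the compact annuli. (Alternatively one compares $u$ on $\mb R\setminus\ov B_{2r_0}$ with $\underline c\,\omega$, $\underline c:=\ds\inf_{\ov B_{2r_0}\setminus B_{r_0}}u>0$ and $\omega$ the $s$-capacitary potential of $\ov B_{2r_0}\setminus B_{r_0}$ in $\mb R\setminus\ov B_{2r_0}$, which stays bounded away from $0$ at infinity precisely because $2s\ge n$.)

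For the \emph{upper bound}, following the second half of the proof of Lemma~\ref{mbounds_1}, for $r>r_0$ put
\[
\tilde v(x)=\begin{cases}\log(|x|/r_0)&|x|<2r,\\ 0&|x|\ge 2r,\end{cases}\qquad
\tilde g(x)=\begin{cases}\tilde C_{\tilde g}\,\log(|x|/r_0)&\tfrac{3r}{2}<|x|<2r,\\ 0&\text{otherwise},\end{cases}
\]
the logarithmic analogues of \eqref{cf20}, \eqref{cf2r}, with $\tilde C_{\tilde g}>0$ to be fixed later independently of $r$. Here the correct substitution is \emph{not} the naive $\sigma^*=0$ (which would make $\tilde g$ a constant), but the logarithmic profile for both functions, since a constant bump produces a defect of order only $1/r$, too small for what follows. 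Using that $-\log|x|$ is a fundamental solution, the computation \eqref{ca3d} becomes, for $x\in B_r\setminus B_{r_0}$,
\[
(-\De)^s\tilde v(x)=C_{1,s}\int_{\mb R\setminus B_{2r}(x)}\frac{\log(|x-y|/r_0)}{|y|^{1+2s}}\,dy\le C\int_{2r}^{+\infty}\frac{\log(t/r_0)}{t^{2}}\,dt=C\,\frac{\log(2r/r_0)+1}{2r}\le C_1(s,r_0)\,\frac{\log r}{r}
\]
for $r$ large (here $1+2s=2$): the defect of $\tilde v$ is of order $\log r/r$, the one qualitative difference from the order $1/r$ of Lemma~\ref{mbounds_1}. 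Similarly, mimicking \eqref{ca3pr},
\[
(-\De)^s\tilde g(x)\le-\tilde C_{\tilde g}\,\tilde C(s)\,\frac1{r^{1+2s}}\int_{3r/2}^{2r}\log(t/r_0)\,dt\le-\tilde C_{\tilde g}\,C_2(s,r_0)\,\frac{\log r}{r},\qquad x\in B_r\setminus B_{r_0},
\]
so choosing $\tilde C_{\tilde g}$ with $\tilde C_{\tilde g}\,C_2>C_1$ gives $(-\De)^s(\tilde v+\tilde g)<0$ on $B_r\setminus B_{r_0}$.

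Now put $\tilde h:=\tilde v+\tilde g$, which is $>0$ on $B_{2r}\setminus B_r$, and $\tilde\rho(r):=\ds\inf_{B_{2r}\setminus B_r}u/\tilde h>0$. Then $(-\De)^s(\tilde\rho(r)\tilde h)<0\le(-\De)^su$ in $B_r\setminus B_{r_0}$, $\tilde\rho(r)\tilde h\le0\le u$ on $\ov B_{r_0}$, and $\tilde\rho(r)\tilde h\le u$ on $\mb R\setminus B_r$, so the comparison principle yields $\tilde\rho(r)\tilde h\le u$ in $\mb R$. Evaluating on $B_{3r_0}\setminus B_{2r_0}$ (where, for $r$ large, $\tilde h=\tilde v\ge\log2$ independently of $r$) gives $\tilde\rho(r)\le C''$ with $C''$ independent of $r$, whence
\[
\ds\inf_{B_{2r}\setminus B_r}u\le C''\sup_{B_{2r}\setminus B_r}\tilde h\le C''(1+\tilde C_{\tilde g})\log(2r/r_0)\le\underline C_{\max}\,\log r;
\]
the remaining range $r_0<r\le 3r_0$ is again covered by continuity of $u$ on compact sets (with $\log r>0$ since $r>r_0>1$). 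This proves \eqref{mbds}.

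The delicate step is the upper bound: one must redo the fractional-Laplacian estimates for the cut functions with the \emph{logarithmic} profile and verify that the defect of $\tilde v$, now of order $\log r/r$ rather than $1/r$, is still dominated by $(-\De)^s\tilde g$ for a fixed (i.e. $r$-independent) $\tilde C_{\tilde g}$. The positivity of $-\log|x|$ near the origin in the lower bound, circumvented by cutting $\Phi$ and adding the bounded corrector $\psi_0$, is the other point that needs care.
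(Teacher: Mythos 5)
Your proof is correct, and the two halves relate to the paper in different ways. For the \emph{lower bound} you take a genuinely different route: you cut the fundamental solution at the \emph{fixed} radius $1$, producing an $r$-independent comparison function $W:=-\log\max(|\cdot|,1)+M\psi_0$, and you absorb the cut-defect with the explicit corrector $\psi_0(x)=(1+x^2)^{-1}-1$, using $(-\De)^{1/2}\psi_0(x)=-\tfrac{x^2-1}{(x^2+1)^2}<0$ for $|x|>1$. The paper instead cuts $\Phi=-\log|\cdot|$ at the \emph{moving} radius $r$ (defining $\check w=0$ on $B_r$, $-\log|\cdot|$ outside) and estimates the $r$-dependent defect directly; your version is cleaner and sidesteps the need to balance a positive defect contribution against one whose size depends on the outer radius $\widehat R(\e,r)$. (Your parenthetical capacitary-potential alternative, exploiting recurrence when $2s\ge n$, is also valid and avoids the $\e$-limit entirely.) For the \emph{upper bound} you follow the paper's argument essentially verbatim — the cut profile $\tilde v$ supported on $B_{2r}$, the annular ``bump'' $\tilde g$, the defect $\sim\log r/r$ dominated by the negative contribution of $\tilde g$ for a fixed $\tilde C_{\tilde g}$, and the comparison with $\tilde\rho(r)\tilde h$ — with the cosmetic improvement of replacing $\log|x|$ by $\log(|x|/r_0)$, which is $\le0$ on $\ov B_{r_0}$ and so dispenses with the paper's additive correction $-\sup_{\ov B_{r_0}}\check h$.

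One imprecision worth fixing: you write that cutting $\Phi$ at $|x|=1$ ``produces only a defect of order $|x|^{-2s}$'' on $\{|x|\ge 2r_0\}$. In fact the defect is $(-\De)^{1/2}\bigl[\log|\cdot|\,\chi_{B_1}\bigr](x)=C\int_{B_1}\tfrac{-\log|z|}{|x-z|^{2}}\,dz=O\bigl((|x|-1)^{-(n+2s)}\bigr)=O(|x|^{-2})$, since the compactly supported difference decays like the kernel $|y|^{-n-2s}$, not $|y|^{-2s}$. This matters: if the defect really were $O(|x|^{-1})$, no fixed multiple of $\psi_0$ (whose fractional Laplacian decays like $|x|^{-2}$) could dominate it, and the comparison function $W$ would fail to be $s$-superharmonic uniformly in $r$. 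Because the true order $|x|^{-2}$ matches that of $(-\De)^{1/2}\psi_0$, the argument goes through exactly as you intended — but the stated order should be corrected to $|x|^{-n-2s}$.
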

	\begin{proof}
		For $r>r_0>1,$ as in Lemma \ref{mbounds_1}, define $$\tilde m(r):=\ds\inf_{B_r} u.$$ Observe that by \eqref{fs}, the fundamental solution of $(-\Delta)^s$ for the case $2s=1$ in $\mb R$ is $\Phi(x)=-\log |x|\to-\infty$ as $|x|\to+\infty.$ For  $r>r_0$, Define 
		\begin{equation*}
			\check w(x)=
			\begin{cases}
				0 &\quad\text{ if } |x|\leq r\\
				-\log |x| &\quad\text{ if  } |x|>r.
			\end{cases}
		\end{equation*} Now for any $r>r_0$ and $\e>0$, we can choose  sufficiently large $\widehat R:=\widehat R(\e,r_0,r)>r$  such that 
		\begin{align}\label{r1.}
			\tilde m(r)+\e\check w\leq u \;\; \text{ in }\ \mb R\setminus B_{\widehat R}.
		\end{align}
		Also, for any $r>r_0,$	it easily follows that
		\begin{align}\label{r2.}
			\tilde m(r)+\e\check w\leq u\;\;\mbox{ in } \ \overline B_{r}.
		\end{align} Next, for any $r>r_0$ and for all $x\in B_{\widehat R}\setminus B_r,$  using that $-\log|x|$ is a fundamental solution for $(-\De)^s$, we derive 
		\begin{align}\label{ca1.00}
			&(-\Delta)^s{(\tilde m(r)+\e\check w(x))}\notag\\&=\e(-\De)^s\check w(x)\notag\\
			%			&=P.V.\; \e\int_{\mb R}\frac{\check w(x)-\check w(x-y)}{|y|^{2}} dy\notag\\
			&= C_{1,\frac 12}\left(P.V.\; \e\int_{\mb R\setminus B_{r}(x)}\frac{\check w(x)-\check w(x-y)}{|y|^{2}} dy+ \e \int_{ B_{r}(x)}\frac{\check w(x)- \check w(x-y)}{|y|^{2}} dy\right)\notag\\
			&=C_{1,\frac 12}\Bigg(P.V.\; \e \int_{\mb R\setminus B_{r}(x)}\frac{-\log|x|+\log |x-y|}{|y|^{2}} dy+ \e\int_{ B_{r}(x)}\frac{-\log |x|}{|y|^{2}} dy\notag\\
			&\qquad\pm \e\int_{ B_{r}(x)}\frac{-\log |x|+\log |x-y|}{|y|^{2}} dy\Bigg)\notag\\
			&=(-\De)^s(-\log|x|)+C_{1,\frac 12} \e \int_{ B_{r}(x)}\frac{-\log |x-y|}
			{|y|^{2}} dy\notag\\
			&\leq  0+ C_{1,\frac 12}\left(\e \int_{ B_{1}(x)}\frac{-\log |x-y|}
			{|y|^{2}} dy + \e \int_{ B_{r}(x)\setminus B_1(x)}\frac{-\log |x-y|}
			{|y|^{2}} dy\right).\end{align}
		Now we estimate the first integration in \eqref{ca1.00} as following:
		\begin{align}\label{gh1}
			& \e \int_{ B_{1}(x)}\frac{-\log |x-y|}	{|y|^{2}} dy\notag\\
			&\leq  \e \int_{ B_{1}(x)}\frac{-\log |x-y|}
			{|r_0-1|^{2}} dy\qquad\text{(since $|y|\geq|x|-|x-y|\geq r_0-1$)}\notag\\
			&\leq C_3 \e \int_0^1
			\frac{-\log t}{(r_0-1)^{2}} dt \qquad\qquad\text{(taking  $|x-y|=t$)}\notag\\
			&= C_3\frac{\e}{(r_0-1)^{2}}[t-t\log t]_0^1\notag\\
			&=C_3\frac{\e}{(r_0-1)^{2}}\qquad\text{(since $\lim_{t\to0}t\log t=0$ by the L´Hospitals rule)},
		\end{align} where $C_3$ is a positive constant that does not depend on $r$. Similarly, we estimate the second integration in \eqref{ca1.00} as following:
		\begin{align}\label{gh2}
			& \e \int_{ B_{r}(x)\setminus B_1(x)}\frac{-\log |x-y|}	{|y|^{2}} dy\notag\\
			&\leq  \e \int_{ B_{r}(x)\setminus B_1(x)}\frac{-\log |x-y|}
			{|2R|^{2}} dy\qquad\text{(since $|y|\leq|x|+|x-y|\leq 2\widehat R$)}\notag\\
			&\geq C_4\e \int_1^r
			\frac{-\log t}{(2\widehat R)^{2}} dt \qquad\qquad\text{(taking  $|x-y|=t$)}\notag\\
			&=C_4 \frac{\e}{(2\widehat R)^{2}}[t-t\log t]_1^r\notag\\
			&=C_4\frac{\e}{(2\widehat R)^{2}}(r-r\log r-1)
			\notag\\&
			<0,\;\text{for sufficiently large $r>r_0$},
		\end{align} where $C_4$ is a positive constant, independent of $r$.
		So,  combining \eqref{ca1.00}, \eqref{gh1} and \eqref{gh2}, for  sufficiently large $r>r_0,$ we have  $(-\De)^s(\tilde m(r)+\e\check w)<0$ in $B_{\widehat R}\setminus B_{r}.$ This and \eqref{ea2}, for every sufficiently large $r>r_0,$ imply that
		\begin{align}\label{r3.}
			(-\De)^s(\tilde m(r)+\e\check w(x))\leq (-\De)^su(x), \; \text{ for  all } x\in  B_{\widehat R}\setminus B_{r}.
		\end{align} Therefore, for every  sufficiently large $r>r_0$,  applying the  comparison principle to \eqref{r1.}, \eqref{r2.} and \eqref{r3.},  we obtain
		\[ \tilde m(r)+\e\check w(x)\leq u(x),\; \text{ for all } x\in  \mb R.\] Now passing to the limit $\e\to 0$ in the last relation,  we get, for any $r>r_0$, that
		\[ \tilde m(r)\leq u(x), \; \text{ for all  } x\in \mb R.\] Therefore, for any fixed $r_2>r_0>1$ large,  we have  $ 0<\tilde m(r_2)\leq u(x),$  for all $\mb R.$ Hence,   for any sufficiently large $r>r_0,$ taking infimum  over $B_{2r}\setminus B_r$ in  the last relation,  we obtain  the first inequality in \eqref{mbds}.\\
		\noi Next, to prove the second inequality in \eqref{mbds},   for any $r>r_0$, we define the following function   \begin{equation}\label{cfr2}\check v (x)  =
			\left\{
			\begin{array}{ll}
				{\log|x|}& \mbox{if } |x| < 2r\\
				0 & \mbox{if } |x| \geq 2r.
			\end{array}
			\right.
		\end{equation}  In addition to that, for any $r>r_0$, we  define another function $\check g
		$ as \begin{equation}\label{cfr} \check g (x) =\left\{
			\begin{array}{ll}
				0  & \mbox{if } |x| \leq \frac{3r}{2}\\
				\check C_{\check g}\log|x|& \mbox{if } \frac{3r}{2}<|x| < 2r\\
				0 & \mbox{if } |x| \geq 2r,
			\end{array}
			\right.
		\end{equation}  where $\check  C_{\check g}$ is a positive constant, independent of $r$, to be determined later.\par
		Now for any $r>r_0$ and for all  $x\in B_r\setminus B_{r_0}$, using that $\log|x|$ is a fundamental solution for $(-\De)^s$,   we obtain 	
		\begin{align}\label{car3pp}
			&(-\Delta)^s\check v (x)\notag\\
			&=  C_{1,\frac 12} P.V. \Bigg(\int_{B_{2r}(x)}\frac{\log|x|-\log|x-y|}{|y|^{2}} dy
			+ \int_{\mb R\setminus B_{2r}(x)}\frac{	{\log |x|}}{|y|^{2}} dy\notag\\ &\qquad\qquad\qquad\pm  \int_{\mb R\setminus B_{2r}(x)}
			\frac{{\log{|x|}}-	{\log |x-y|}}{|y|^{2}} dy\Bigg)\notag\\
			&=(-\De)^s(\log|x|)+ C_{1,\frac 12}\int_{\mb R\setminus B_{2r}(x)}\frac{\log |x-y|}{|y|^{2}} dy\notag\\
			&\leq0+ C_{1,\frac 12} \int_{\mb R\setminus B_{2r}(x)}\frac{\log |x-y|}{(\frac 12|x-y|)^{2}} dy \quad \text { (since for $y\in \mb R\setminus B_{2r}(x), \, |y|\geq \frac 12 |x-y|$)} \notag\\
			&\leq  C_5\int_{2r}^{+\infty}\frac{\log t}{t^{2}} dt \qquad\qquad \qquad\qquad\;\text{  (taking $|x-y|=t$)}\notag\\
			&=C_5 \left(\frac{\log{2r}}{2r}+\frac{1}{2r} \right)
			\notag\\&
			\leq 2C_5 \frac{\log{2r}}{r}, \;\text{for sufficiently large $r>r_0$}, 
		\end{align}  where $C_5>0$ is a  constant,  independent of $r$.
		Next,   for all $x\in B_r\setminus B_{r_0},$  we get
		\begin{align}\label{car3pr}
			&(-\Delta)^s\check g(x)\notag\\
			&=C_{1,\frac 12}\left( \int_{{B_{2r}(x)\setminus B_{\frac{3r}{2}(x)}}}\frac{\check g(x)-\check g(x-y)}{|y|^{1+2s}} dy+P.V. \int_{\mb R\setminus\left({B_{2r}(x)\setminus B_{\frac{3r}{2}(x)}}\right)}\frac{\check g(x)-\check g(x-y)}{|y|^{1+2s}} dy\right)\notag\\
			&=C_{1,\frac 12} \int_{B_{2r}(x)\setminus B_{\frac{3r}{2}(x)}}\frac{ -\check C_{\check g}\log|x-y| }{|y|^{2}} dy+0\notag\\
			&\leq C_{1,\frac 12} \int_{B_{2r}(x)\setminus B_{\frac{3r}{2}}(x)}\frac{ -\check C_{\check g}\log|x-y|}{|3r|^{2}} dy\qquad\quad \text{(since  $|y|\leq |x-y|+|x|\leq 2r+ r=3r$)} \notag\\
			&\leq -\check C_{\check g}C_6\, \frac{1}{r^2}\int_{\frac{3r}{2}}^{2r} \log t dt\qquad\quad \text{(taking  $|x-y|=t$)}\notag\\
			&=\check C_{\check g} C_6\,\frac{1}{r^2} \big[t-t \log t \big]_{\frac{3r}{2}}^{2r}\notag\\
			&=\check C_{\check g} C_6\,\frac{1}{r^2}\left[\frac r2-2r\log{2r}+\frac{3r}{2}\log{\frac{3r}{2}}\right]\notag\\
			&=\check C_{\check g} C_6\,\frac{1}{r^2}\left[\frac r2-\frac r2\log{2r}+ \frac{3r}{2}\log{\frac 34}\right]\notag\\
			&\leq \check C_{\check g}C_6\left[\frac {1}{2r}-\frac {\log{2r}}{2r}\right]\notag\\&	
			\leq-{\check C_{\check g}}C_7\frac {\log{2r}}{r}, \;\text{for sufficiently large $r>r_0,$}
		\end{align} where $C_6, C_7$ are two positive constants which are independent of $r$. 
		Now we choose the constant  $\check C_{\check g}$ such that $\check C_{\check g} C_7>2 C_5$, that is, from \eqref{car3pp} and \eqref{car3pr}, for sufficiently large $r>r_0$, it follows that \begin{align}\label{nbbn}(-\De)^s(\check v+\check g)(x)<0, \;\text{ for all  }  x\in B_r\setminus B_{r_0}.\end{align}  For any $r>r_0$, combining  \eqref{cfr2} and \eqref{cfr}, define 
		$$ \check h(x):={\check  v(x)+\check g(x)}
		$$ and	set $$\check \rho(r):=\inf_{B_{2r}\setminus B_{r}}\frac{u}{\check h}>0.$$
		Now  using \eqref{ea2} and  \eqref{nbbn}, for sufficiently large $r>r_0$, we have \begin{align}\label{rfr}(-\De)^s\check \rho(r)\left(\check h-\ds\sup_{\overline B_{r_0}} \check h\right)<0\leq (-\De)^su\quad \text{ in  }   B_{r}\setminus  B_{r_0}.\end{align}   Furthermore, for any $r>r_0$, from the definition of $\check  h$, we find that  $\check \rho(r)\left(\check  h-\ds\sup_{\overline B_{r_0}} \check h\right)\leq0\leq u$ in $\overline {B}_{r_0}$ and 
		$ \check \rho(r)\left(\check h-\ds\sup_{\overline B_{r_0}} \check h\right)\leq u$ in $\mb R\setminus  B_{r}$.   Thus, for any  sufficiently large $r>2r_0,$ applying the  comparison principle to \eqref{rfr}, we get
		\[ \check \rho(r)\left(\check h-\ds\sup_{\overline B_{r_0}} \check h\right) \leq u\quad\text {  in  }   \mb R,\] and taking infimum over $B_{3r_0}\setminus B_{2r_0}$ in the both sides of the last relation, it yields that  \[ 
		\check  \rho(r)\left(  \inf_{B_{3r_0}\setminus B_{2r_0}}\check h-\ds\sup_{\overline B_{r_0}} \check  h\right)\leq \inf_{B_{3r_0}\setminus B_{2r_0}}u.\]
		Thus, the second inequality in \eqref{mbds}  follows. Hence, the proof of the lemma is complete.
	\end{proof}\hfill{\QED}
	{\bf Proof of Theorem \ref{thm2}:}
	We argue the proof by obtaining a contradiction. For that, let us suppose that there is a non-negative function $u$ in $\mb R$ such that,  $u$ is continuous and $u>0$ in $\mb R\setminus B_{r_0},$ for some fixed $r_0> 1$, and  $u$  satisfies the inequality \eqref{slinxs0} in $\mb R\setminus B_{r_0}$.
	For each $r> 2r_0$, denote $$u_r(x) : = u(rx),$$ and it is easy to verify that $u_r$ satisfies the following:
	\begin{equation}\label{ss}
		(-\De)^s u_r\geq r^{2s} f(u_r, rx) \quad \mbox{in} \ \R \setminus B_{\frac12}.
	\end{equation}
	For $r>2r_0$,  set $$m(r) := \inf_{\overline B_{2r}\setminus B_r} u = \inf_{\overline B_2\setminus B_1} u_r.$$  Let $\bar C$ be as in Lemma \ref{vwhlap} with $\nu =\frac12.$
	By  Lemma \ref{vwhlap}, for each $r>2r_0$, for the set $$A_r:=(B_{2} \setminus B_1 ) \cap \{ m(r) \leq u_r \leq \bar C m(r) \},$$ we have  $$|A_r|\geq\frac{1}{2} |B_2\setminus B_1|.$$  Then, applying Proposition \ref{qsmp}-$(i)$, for every $r>2r_0$, it implies that 
	\begin{equation}\label{mrandf}
		{m(r)} \geq c \inf\left\{ r^{2s}f(t,x) : r \leq |x| \leq 2r, \ m(r) \leq t \leq \bar C m(r)\right\},
	\end{equation}where  $ c> 0$ is a positive constant that h is independent of $r$.
	We claim that
	\begin{equation} \label{noloitering}
		\quad m(r) \rightarrow + \infty \quad \mbox{as} \ r \to +\infty.
	\end{equation}
	For that, first observe that since $\sigma^*:=-1+2s>0$ and by Lemma \ref{mbounds_1} and Lemma \ref{mb2}, we have $m(r)\geq \min \{\bar c_{\min}, \underline c_{\min}\}>0,$ for any $r>2r_0$, it follows that $\ds\lim_{r\to+\infty}m(r)\not=0.$ Now suppose that we have a subsequence $r_j \to +\infty$ such that $m(r_j) \rightarrow m_0 \in (0,\infty)$, then by taking $r=r_j \to +\infty$ in \EQ{mrandf}, we find a contradiction to $(f2')$. Thus,  the claim holds.\\ Now,  we aim to complete the proof of Theorem \ref{thm2} by showing \EQ{noloitering}  contradicts  the condition  $(f3')$.
	First, we may assume that $\widetilde\Phi$ is normalized so that $$\ds\max_{ \overline{B}_{r_0}} \widetilde \Phi= 1 \;\; \text{and  } \;\widetilde\Phi> 0 \;\text{in}\; \R \setminus B_{r_0},$$ where $\tilde \Phi=-\Phi$ is as  defined in \eqref{fs} for the case $n=1$ with $\sigma^*:=-1+2s>0$. 
	Using Lemma \ref{mbounds_1} and Lemma \ref{mb2}, we can establish the upper bound of $m(r)$ as 
	\begin{equation} \label{mrupna}
		m(r) \leq \mc C_1 \max_{r\le |x|\le 2r}\widetilde \Phi(x) \leq \mc C \min_{r\le |x|\le 2r}\widetilde \Phi(x), \;\text{ for any } r> 2r_0,
	\end{equation} where $\mc C_1, \mc C>0$ are positive constants,  independent of $r$.
	Next, we find  a lower bound for $m(r)$.
	Recalling $(f3^\prime)$, Let $k>0$ and assume that $r>2r_0$ is large enough  so that  $\underline\mu\leq m(r)$. Suppose for contradiction that $ m(r)\leq k \ds\min_{r\le |x|\le 2r}\widetilde \Phi(x)$. This, combining with \EQ{mrandf} and \eqref{noloitering}, for sufficiently large $r> 2r_0$, implies  that 
	\begin{align*}
		\inf_{r\le |x|\le 2r}\widetilde\Psi_{\bar C k}(x)
		&\leq
		\inf\left\{ t^{-1} |x|^{2s} f(t,x) : r\le |x| \leq 2 r , \ \underline\mu \leq t \leq \bar C k \widetilde \Phi(x) \right\} \\
		& \leq \inf \left\{ t^{-1} r^{2s} f(t,x) : r \leq |x| \leq 2r, \ m(r) \leq t \leq \bar C m(r) \right\} \\
		&\leq \mc C_2\frac {1}{m(r)} \inf \left\{  r^{2s} f(t,x) : r \leq |x| \leq 2r, \ m(r) \leq t \leq \bar C m(r) \right\} \\
		&\leq \mc C_3,
	\end{align*} where $\mc C_2, \mc C_3>0$ are positive constants  not  depending on $r$.
	This contradicts $(f3')$ if $k> 0$ is  sufficiently small, and thus, for any $r>2r_0,$ we obtain 
	\begin{equation}\label{jjj}
		m(r)\geq \tilde c_1 \min_{r\le |x|\le 2r} \widetilde\Phi(x)\geq \tilde c\max_{r\le |x|\le 2r} \widetilde\Phi(x), 
	\end{equation} where $\tilde c_1, \tilde c>0$ are some  positive constants not depending on $r$.
	Using \EQ{mrupna} and \eqref{jjj}, we deduce
	\begin{equation}\label{mrtrap2}
		\tilde c\max_{r\le |x|\le 2r}\widetilde \Phi(x) \leq m(r) \leq \mc C \min_{r\le |x|\le 2r}\widetilde \Phi(x),  \quad \mbox{for sufficiently large}  \ r>2r_0,
	\end{equation} where $\tilde c, \mc C$ are  as given in \eqref{mrupna} and in \eqref{jjj}, respectively.
	Define the quantity
	\begin{equation*}
		\tilde\eta(r) : = \inf_{\mb R\setminus B_r} \frac{u}{\widetilde\Phi -1}>0, \quad \text{ for any }r > r_0.
	\end{equation*} Then,  for any $r>r_0$, it holds that
	\begin{align}\label{gbf}
		\tilde\eta(r)({\tilde \Phi}-1)=0\leq u \;\;\text{ in } \overline B_{r_0}.\end{align}and for every sufficiently large $r>r_0$, we have
	\begin{align}\label{gbf2}
		\tilde\eta(r)({\tilde \Phi}-1)\leq u \;\;\text{ in }\mb R\setminus  B_{r}.\end{align}
	Moreover, for any $r>r_0,$ we get \begin{align}\label{gbf3}  (-\De)^s\tilde\eta(r)({\tilde \Phi(x)}-1)=0\leq f(u,x) \leq (-\De)^s u(x),\; \mbox{ for  all }  x\in B_r \setminus B_{r_0}.\end{align}
	Therefore, for sufficiently large $r>r_0$,  using the comparison principle to  \eqref{gbf}, \eqref{gbf2} and \eqref{gbf3}, we obtain
	\begin{equation}\label{pns4}
		\tilde\eta(r) \left( \widetilde \Phi - 1 \right)\leq u\quad \mbox{in} \;\;\;\mb R.
	\end{equation}
	Next, for $r> 2r_0$, set the positive functions
	\begin{equation*}
		v_r(x): = u_r(x) +\tilde \eta(2r), \qquad w_r(x):=\tilde \eta(2r)  \widetilde \Phi(rx).
	\end{equation*}
	Clearly, for sufficiently large $r>2r_0$, by  \eqref{pns4}, it holds that  $v_r(x)\ge w_r(x)$ and recalling \eqref{ss} and the fact that $\tilde \Phi$ is a fundamental solution for $(-\De)^s$, we infer that
	\begin{equation}\label{sss}
		(-\De)^s v_r (x)\geq r^{2s} f(u_r,rx)\ge 0 = (-\De)^sw_r(x), \;\mbox{ for all }\  x\in B_4 \setminus B_{\frac 12}.
	\end{equation}
	Now	using \EQ{mrandf}, \EQ{mrtrap2} and $(f3')$, for every sufficiently large $r> 2r_0$, it follows that
	\begin{align*}
		\inf_{A_r}\left( r^{2s} f(u_r,rx) \right) & \geq \frac{1}{2^{2s}} \inf \left\{ |y|^{2s} f(t,y) : r \leq |y| \leq 2r, \ m(r) \leq t \leq \bar C m(r) \right\} \\
		& \geq \tilde c_2 {m(r)} \inf\left\{ t^{-1} |y|^{2s} f(t,y) : \underline\mu \leq t \leq \mc C \bar C\widetilde\Phi(y), \ r \leq |y| \right\} \\
		&  \geq \tilde c_3 {m(r)}, 
	\end{align*} where $\tilde c_2, \tilde c_3>0$ are positive constants which do not depend on $r$ and thus, 
	in particular, for every such large $r>2r_0$, by  \eqref{sss}, we have
	\begin{equation*}
		(-\De)^s (v_r -w_r)(x)=	(-\De)^s v_r(x)\geq \tilde c_3{m(r)} \chi_{A_r}(x), \; \mbox{ for all  } \ x\in B_{4} \setminus B_{\frac 12}.
	\end{equation*}
	%	 where $c>0$ is a positive constant  that is independent of $r$.
	Hence, from the last relation, applying Proposition \ref{qsmp}-$(i)$ with $A:=A_r$, $\Om:=B_4 \setminus B_{\frac 12}$ and $K:=\overline B_2 \setminus B_{1}$,  for every  large $r>2r_0,$ we get
	\begin{equation}\label{kaj}
		v_r(x) \geq \tilde c_4 m(r) +w_r (x),\; \mbox{ for   all } \ x\in\overline  B_2\setminus B_{1},
	\end{equation}
	where $\tilde c_4> 0$ is some positive constant that  is  independent  of $r$. Recalling the definition of $v_r$ and $w_r$, combining with \re{mrtrap2} and \eqref{kaj},  for every sufficiently large $r> 2r_0$, we deduce
	\begin{equation}\label{dm,}
		u(x) - \tilde\eta(2r) \left( \widetilde \Phi(x) - 1 \right) \geq \tilde c_4 m(r) \geq \tilde c_0 \widetilde\Phi (x),\;\mbox{ for all } \ x\in \overline B_{2r}\setminus B_r,
	\end{equation}  	where $\tilde c_0> 0$ is a constant  not depending on $r$. Now by \eqref{pns4}, we have, for all $r>r_0$,  $$\tilde\eta(r)=:C_0,$$ where $C_0$ is some positive constant which does not depend on $r$. Therefore, there exists a sequence $\{x_j\}\subset \mb R$ such that by the definition of $\tilde\eta$, it yields that
	\begin{equation}\label{dm,,}
		\frac{u(x_j)}{\widetilde\Phi(x_j) -1}\to C_0\; \;\;\;\text{ as } |x_j|\to+\infty.
	\end{equation}  Choose $r_j:=\frac 23|x_j|$ such that $r_j:=\frac 23|x_j|\leq |x_j|\leq \frac 43|x_j|=:2r_j.$ So, $x_j\in \overline B_{2r_j}\setminus B_{r_j}$
	and hence, from \eqref{dm,}, we obtain
	\begin{equation*}
		u(x_j) - \left( C_0 + \tilde c_0 \right) \left( \widetilde\Phi (x_j)- 1 \right) > 0.
		%		, \quad \mbox{ for } \ x_j\in \overline B_{2r_j}\setminus B_{r_j},
	\end{equation*} In the last inequality, passing to the limit $|x_j|\to+\infty$ and then using \eqref{dm,,}, we obtain $-c_2\geq 0$, where $c_2$ is as  in \eqref{dm,}. This  is absurd.
	Thus, the proof is  complete.\hfill{\QED}
	
	%%%%%%%%%%%%%%%%%%%%%%%%%%%%%%%%%%
	\section{ Liouville type Theorem: Case $2s<n$} \label{sectwo}
	In this section,  in the case $2s<n$, first we will show  the  nonexistence of positive solutions of the following fractional   inequality:
	\begin{equation} \label{slinxs}
		(	-\Delta)^s u \ge f(u)
	\end{equation}
	in exterior domains of $\mb R^n$,  $n\geq 1$.  The nonlinearity $f=f(t)$ satisfies the following hypotheses:
	{	\begin{itemize}
			\item[$(f1)$] The function $f:(0, +\infty) \to (0,+\infty)$ is continuous and positive.
			\item[$(f2)$] We assume that 
			\begin{equation} \label{fpow}
				\liminf_{t\to 0} t^{-\frac{n}{n-2s}}f(t) >0.
			\end{equation}
	\end{itemize}}
	\begin{Example}
		An  example of such function $f$ satisfying $(f1)$, $(f2)$ is given  below:
		\begin{equation}\label{cf1b}f (t)  :=
			\left\{
			\begin{array}{ll}
				\frac52\, t^{\frac{n}{n-2s}} & \mbox{if } t \leq 1 \\
				\cos{2\pi t}+1+\frac 1t& \mbox{if } t \geq 1.
			\end{array}
			\right.
		\end{equation}
	\end{Example}
	\noi	The statement of the next main theorem  in this article reads as follows:
	\begin{Theorem}\label{slthm}
		Let $s\in(0,1)$, $2s<n$ and assume that the nonlinearity $f$ satisfies the hypotheses $(f1)$-$(f2)$. Then the  inequality \re{slinxs} has no continuous positive {solution} in any exterior domain of $\mb R^n, n\geq1$.
	\end{Theorem}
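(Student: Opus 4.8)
The plan is to argue by contradiction, following the scheme of the proof of Theorem~\ref{thm2} but now with $\sigma^*:=2s-n<0$, so that the fundamental solution $\Phi(x)=|x|^{\sigma^*}$ is positive and \emph{decaying} at infinity. Suppose $u$ is a continuous positive solution of $(-\De)^su\ge f(u)$ in an exterior domain; passing to a smaller domain we may assume it is $\mb R^n\setminus B_{r_0}$ with $r_0>1$, and $u\ge0$ on $\mb R^n$. For $r>2r_0$ put $u_r(x):=u(rx)$, so that $(-\De)^su_r\ge r^{2s}f(u_r)$ in $\mb R^n\setminus B_{1/2}$, and set $m(r):=\inf_{\overline B_{2r}\setminus B_r}u=\inf_{\overline B_2\setminus B_1}u_r$. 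As in the proof of Theorem~\ref{thm2}, Lemma~\ref{vwhlap} (with $\nu=\tfrac12$) produces a set $A_r\subset B_2\setminus B_1$ with $|A_r|\ge\tfrac12|B_2\setminus B_1|$ on which $m(r)\le u_r\le\bar C m(r)$, and then Proposition~\ref{kslap} (applied with $h=r^{2s}f(u_r)$, $A=A_r$) yields the basic estimate $m(r)\ge c\,r^{2s}\inf_{m(r)\le t\le\bar C m(r)}f(t)$ with $c=c(n,s)>0$.

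I would first establish that $m(r)\to0$ and $m(r)\asymp r^{\sigma^*}$. Since $u$ is a nonnegative continuous supersolution of $(-\De)^su=0$ in an exterior domain and $2s<n$, it is bounded on $\mb R^n$ (the fundamental solution decays, so such functions have a finite $\limsup$ at infinity), whence $m(r)\not\to+\infty$. If $m(r_j)\to c_1\in(0,+\infty)$ along some $r_j\to+\infty$, then $\inf_{[m(r_j),\bar C m(r_j)]}f$ stays bounded below by a positive constant (positivity and continuity of $f$ on a fixed compact interval), and the basic estimate forces $m(r_j)\gtrsim r_j^{2s}\to+\infty$, a contradiction; hence $m(r)\to0$. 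Now $(f2)$ gives $\delta>0$, $\e_0>0$ with $f(t)\ge\delta\,t^{n/(n-2s)}$ for $0<t<\e_0$, and since $m(r)\to0$ the basic estimate becomes $m(r)\ge c\delta\,r^{2s}m(r)^{n/(n-2s)}$ for $r$ large; as $n/(n-2s)>1$ this rearranges to $m(r)\le\mc C\,r^{\sigma^*}$. For the matching lower bound I would compare $u$ from below with a cut-off of the fundamental solution: if $\Psi$ solves $(-\De)^s\Psi=0$ in $\mb R^n\setminus\overline B_{2r_1}$ with $\Psi=\chi_{\{r_1\le|x|\le2r_1\}}$ on $\overline B_{2r_1}$ (for a fixed $r_1>r_0$), the comparison principle gives $u\ge\delta_1\Psi$ on $\mb R^n$, where $\delta_1:=\min_{\{r_1\le|x|\le2r_1\}}u>0$, and $\Psi(x)\ge c_\Psi|x|^{\sigma^*}$ for $|x|$ large — a fact proved by inserting truncations of $\Phi(x)=|x|^{\sigma^*}$ together with localized bump correctors that absorb the nonlocal error, in the spirit of Lemmas~\ref{mbounds_1}--\ref{mb2}. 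Thus $c_*\,r^{\sigma^*}\le m(r)\le\mc C\,r^{\sigma^*}$.

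The contradiction then comes from a single bootstrap that breaks the critical scaling through a logarithm. By the previous step $u(x)\ge c_*\,|x|^{\sigma^*}$ for $|x|$ large, and on the set $E:=\bigcup_{\rho}\rho A_\rho$, which meets every dyadic annulus $B_{2R}\setminus B_R$ in a set of measure at least $\tfrac12|B_{2R}\setminus B_R|$, one has, using $(f2)$ and the identity $\sigma^*\cdot\tfrac{n}{n-2s}=-n$,
\[
(-\De)^su\ \ge\ \delta\,u^{n/(n-2s)}\ \ge\ c_\sharp\,|x|^{-n}\qquad\text{on }E,\quad c_\sharp>0 .
\]
Comparing $u$ from below with the Riesz potential $w$ of $c_\sharp|x|^{-n}\chi_E$ — the solution of $(-\De)^sw=c_\sharp|x|^{-n}\chi_E$ vanishing at infinity — and noting that the kernel $|x-y|^{-(n-2s)}$ integrated against $|y|^{-n}$ over the dyadic annuli of $E$ lying in $\{r_0<|y|<|x|/2\}$ produces a logarithm, namely $w(x)\ge c'\,|x|^{\sigma^*}\log|x|$ for $|x|$ large, we obtain $u(x)\ge c'\,|x|^{\sigma^*}\log|x|$ and therefore $m(r)\ge c''\,r^{\sigma^*}\log r$ for $r$ large. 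This contradicts $m(r)\le\mc C\,r^{\sigma^*}$, completing the proof. (If instead $\liminf_{t\to0}t^{-n/(n-2s)}f(t)=+\infty$, one may take $\delta$ arbitrarily large in the rearrangement of the previous paragraph, so already $m(r)r^{-\sigma^*}\to0$, which contradicts $m(r)\ge c_*\,r^{\sigma^*}$.)

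The hard part is the barrier analysis. As in the $2s\ge n$ case the argument rests on cutting the fundamental solution and correcting the resulting nonlocal errors with localized bumps, but here $\Phi(x)=|x|^{\sigma^*}$ is decreasing and singular at the origin, which reverses the geometry of the cuts compared with Lemmas~\ref{mbounds_1}--\ref{mb2}; one must verify with care the decay estimate $\Psi\asymp|x|^{\sigma^*}$ and the logarithmic gain in $w$, and handle the fact that $u$ is only assumed nonnegative on $\overline B_{r_0}$ when invoking the comparison principle. Unlike the local setting of \cite{AS1,BC1}, where the fundamental solutions enter directly, these truncations are forced on us by the nonlocality of $(-\De)^s$.
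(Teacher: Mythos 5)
Your proposal follows the paper's strategy up to the two-sided estimate $c_*r^{\sigma^*}\le m(r)\le\mc C\,r^{\sigma^*}$, but the closing contradiction is genuinely different, and as written it has a gap. The paper does \emph{not} use a Riesz-potential/logarithmic bootstrap. It defines $w_\gamma$ as a truncated fundamental solution plus a localized bump so that $(-\De)^sw_\gamma<0$ outside a large ball, sets $\rho(r):=\inf_{B_{2r}\setminus B_r}u/w_\gamma$, uses the comparison principle to show $\rho(r)$ equals a constant $C^*$ independent of $r$, and then applies Proposition~\ref{kslap} a second time to $\varphi_r:=u_r-C^*w_\gamma(r\cdot)$ (which still satisfies $(-\De)^s\varphi_r\ge r^{2s}f(u_r)$). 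Combined with \eqref{lkj} this yields the fixed-size gain $\varphi_r\ge a\,r^{\sigma^*}\gtrsim w_\gamma$ on $B_2\setminus B_1$, i.e.\ $u\ge\bigl(C^*+\tfrac{a}{1+\mu}\bigr)w_\gamma$ on an annulus, contradicting the definition of $C^*$ as the infimum of $u/w_\gamma$. This closes the argument entirely through the quantitative strong maximum principle, with no Green function or Riesz potential computation.

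The gap in your route is the comparison step ``$u\ge w$'' where $w$ is the Riesz potential of $c_\sharp|x|^{-n}\chi_E$. You have the supersolution inequality for $u$ only in the exterior domain $\mb R^n\setminus B_{r_0}$, and on the complementary set $\overline B_{r_0}$ you know $u\ge 0$ whereas $w$ is strictly positive there (the Riesz kernel does not vanish near the origin even though $E$ is far away). So the hypotheses of the comparison principle are not met, and you cannot conclude $u\ge w$ without a further barrier correction. This is fixable in principle (e.g.\ replace $w$ by the Dirichlet solution $w_R$ in $B_R\setminus B_{r_0}$ with zero data, send $R\to\infty$, and re-derive the logarithmic lower bound via Green function estimates for the annulus), but the proposal does not supply this, and the Green-function lower bound is itself nontrivial. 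A smaller gap: to get $m(r)\to0$ you assert that $u$ is bounded on $\mb R^n$ ``because the fundamental solution decays.'' The paper never proves boundedness of $u$; Lemma~\ref{fundycmpl} only establishes $\inf_{B_{2r}\setminus B_r}u\le C_M$, and does so with an explicit barrier (the function $\Psi_g$ built from the truncated fundamental solution and a bump). Your one-line heuristic should be replaced by that argument, or by some other proof; as stated it is an unjustified strengthening. With these two points repaired the logarithmic bootstrap would give a valid alternative contradiction, but the paper's second use of Proposition~\ref{kslap} is shorter and avoids both issues.
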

	
	\begin{Remark}
		In this section, we explicitly prove Theorem \ref{slthm}, the analogous result to Theorem \ref{thm2}, in the case  $2s<n$, $n\geq1$ with  the nonlinearity $f(u)$ satisfying the assumptions $(f1)$-$(f2)$, for the  sake of simplicity. However, we would like to mention that adopting  the same  notion as the proof of  Theorem \ref{thm2},  one can be able to extend the result in   Theorem \ref{slthm}   for general $f:=f(u,x)$. The precise statement of the said result is given in the next theorem.
	\end{Remark}
	\begin{Theorem}\label{thm3}	Let $s\in (0,1)$ with $2s<n$ and $n\geq 1$. Suppose the nonlinearity $f=f(t,x)$	such that
		\begin{align}\label{aq}
			f: (0, +\infty) \times (\R^n \setminus B_{r_0}) \to (0,+\infty) \text{ is continuous for any $r_0>0$.}
		\end{align} 
		Moreover, $f$ satisfies $(f2^\prime)$ in addition to the following hypothesis:
		\begin{itemize}
			\item[$(f4^\prime)$] Let $\sigma^*:=-n+2s <0$. Then, there exists a constant $\bar \mu > 0$ such that  if we define
			\begin{equation*}
				\Psi_k(x) := |x|^{2s} \inf_{k\Phi(x)\le t\le\bar \mu }  \frac{f(t,x)}{t} \quad\mbox{and}\quad h(k):=  \liminf_{|x|\to +\infty} \Psi_k(x),
			\end{equation*}
			then $0 < h(k) \leq +\infty,$ for each $k>0$, and
			$	\ds\lim_{k\to +\infty} h(k) =+\infty.$
		\end{itemize}
		Then, the inequality \re{slinxs0} has no continuous positive  solution in any exterior domain of $\mb R^n$.
		
	\end{Theorem}
	We present below some models  of such nonlinearities $f$ that satisfy the hypotheses \eqref{aq}, $(f2^\prime), (f4^\prime)$. We encourage the readers to see \cite{chen2} for more such examples.
	\begin{Remark}
		\begin{itemize}
			\item[1.] Consider the nonlinearity $f$ of the form \eqref{gbz}. Then clearly, \eqref{aq} and $(f2^\prime)$  imply that $g:(0,+\infty)\to(0,+\infty)$ is continuous.  We claim that, together with this,  a sufficient condition for $(f4^\prime)$ is:
			\begin{align}\label{z3} \text {if $ 2s<n$, {then}  $\ds \liminf_{t\to 0} t^{- \widetilde\al^*}g(t) > 0$,  where  $ \widetilde\al^*:= 1+ \frac{2s-\gamma}{-\sigma^*}.$  }
			\end{align}
			Arguing in a similar fashion as in Remark \ref{grmk}, we can obtain the claim in \eqref{z3}.
			\item[2.] Consider a function $f$ satisfying \eqref{aq} and $(f2^\prime)$ such that $\frac{f(t,x)}{t}$ is increasing in $t>0$, for all $x\in \mb R^n\setminus B_{r_0}$.
			Moreover, for each $k>0$,
			\[|x|^nf(k|x|^{-n+2s},x)>z_2(k),\] where $\ds\lim_{k \to +\infty} \frac{z_2(k)}{k}=+\infty.$ One of the standard examples for $z_2$ is  $z_2(k)=a_2 k^b$,$b>1$, with some  constant $a_2>0$. The above condition implies $(f4^\prime)$. \end{itemize}
	\end{Remark}
	
	\noi Before proving Theorem \ref{slthm}, we establish the following important lemmas.
	\begin{Lemma}\label{fundycmpl}
		Let  $s\in (0,1)$  with $ 2s<n$ and $\Om$	 be an exterior domain  of $\R^n$, $n\geq 1$.	Let $u$  be a non-negative function in $\mb R^n$ such that,  $u$ is  continuous and $u>0$     in $\Om$ and moreover, \begin{align}\label{niru}(-\De)^su\geq 0 \text  { in } \Om.\end{align} Then, there are constants $c_{\min},\,C_{M}> 0$, depending only on $u, s, n$ and $\Omega$, such that
		\begin{equation} \label{fundycmp}
			c_{\min} r^{-n+2s} \leq \inf_{B_{2r} \setminus B_r} u\leq C_M \quad \mbox{for every sufficiently large} \ r> 0.
		\end{equation}
	\end{Lemma}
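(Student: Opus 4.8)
We may assume, translating if necessary, that $0\notin\Omega$, and we fix $r_0>0$ with $\mb R^n\setminus\Omega\subseteq\ov B_{r_0}$, so that $\{|x|>r_0\}\subseteq\Omega$, $u$ is continuous and strictly positive on $\{|x|>r_0\}$, $u\ge 0$ on $\mb R^n$, and $(-\De)^su\ge 0$ there; since $2s<n$ we have $\sigma^*=-n+2s<0$, so by \eqref{fs} the fundamental solution is $\Phi(x)=|x|^{\sigma^*}$, which tends to $0$ at infinity. The plan is to prove the two inequalities separately, imitating (with the appropriate change of sign of $\sigma^*$) the second inequalities of Lemma \ref{mbounds_1} and Lemma \ref{mb2}.

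\emph{Upper bound.} For each large $r$ I would use a cut, shifted fundamental solution together with a ``$\rho(r)$-rescaling''. Set $v_r(x)=\big(r_0^{\sigma^*}-|x|^{\sigma^*}\big)\chi_{B_{2r}}(x)$; as $\sigma^*<0$ this is $\le 0$ on $\ov B_{r_0}$ and equals an $r$-independent positive constant of size $\approx r_0^{\sigma^*}$ on $B_{2r}\setminus B_r$. The cut at $\pa B_{2r}$ removes positive mass, so a computation like \eqref{ca3d} gives $(-\De)^sv_r\le Cr^{-2s}$ on $B_r\setminus B_{r_0}$ with $C$ independent of $r$; a positive ``collar'' $g_r$ of fixed amplitude supported in $B_{2r}\setminus B_{3r/2}$ contributes, as in \eqref{ca3pr}, a term of order $-c\,C_gr^{-2s}$ there, so for $C_g$ large $h_r:=v_r+g_r$ satisfies $(-\De)^sh_r<0$ on $B_r\setminus B_{r_0}$. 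With $\rho(r):=\inf_{B_{2r}\setminus B_r}(u/h_r)>0$, the facts that $h_r\le 0$ on $\ov B_{r_0}$, $h_r=0$ on $\{|x|\ge 2r\}$ and $\rho(r)h_r\le u$ on $B_{2r}\setminus B_r$ let the comparison principle on $B_r\setminus B_{r_0}$ give $\rho(r)h_r\le u$ on $\mb R^n$. Evaluating on the fixed annulus $B_{3r_0}\setminus B_{2r_0}$ — where $g_r\equiv0$ and $h_r\ge r_0^{\sigma^*}-(2r_0)^{\sigma^*}>0$ uniformly — yields $\rho(r)\le C$, and since $\sup_{B_{2r}\setminus B_r}h_r$ is bounded independently of $r$, we get $\inf_{B_{2r}\setminus B_r}u\le\rho(r)\sup_{B_{2r}\setminus B_r}h_r\le C_M$.

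\emph{Lower bound.} This is where nonlocality really bites: unlike the local case, one cannot compare $u$ from below against a bare multiple of $\Phi=|x|^{\sigma^*}$, since $\Phi$ blows up at the origin while $u$ is only $\ge 0$ on the compact hole, and the comparison principle sees the whole complement. Instead I would compare $u$ with the exterior harmonic profile of a fixed inner shell. Put $S:=\{\tfrac{3r_0}{2}\le|x|\le\tfrac{7r_0}{4}\}\subseteq\{|x|>r_0\}$, $m_0:=\min_S u>0$, and let $v_\infty$ be the bounded solution of $(-\De)^sv_\infty=0$ in $\{|x|>2r_0\}$, $v_\infty=m_0\chi_S$ in $\ov B_{2r_0}$, $v_\infty(x)\to0$ as $|x|\to\infty$ — which exists because $2s<n$, e.g. as the increasing limit, as $R\to\infty$, of the solutions $v_R$ of the same problem on $B_R\setminus\ov B_{2r_0}$ with zero data outside $B_R$ (all bounded by $m_0$). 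Since $u\ge m_0\chi_S$ on $\ov B_{2r_0}$ and $u\ge0$ on $\{|x|\ge R\}$, the comparison principle on each bounded annulus $B_R\setminus\ov B_{2r_0}$ gives $v_R\le u$, hence $u\ge v_\infty$ on $\{|x|>2r_0\}$. It then remains to see that $v_\infty(x)\ge c|x|^{\sigma^*}$ for $|x|$ large, i.e. that a positive exterior fractional harmonic function with fixed interior mass decays exactly like $\Phi$. This follows from the explicit exterior Poisson kernel of a ball, $v_\infty(x)=c_{n,s}m_0\int_S\big(\tfrac{|x|^2-4r_0^2}{4r_0^2-|y|^2}\big)^s|x-y|^{-n}\,dy$, which for $|x|\ge3r_0$ is $\gtrsim|x|^{2s}|x|^{-n}=|x|^{\sigma^*}$; alternatively, and in the spirit of the paper, one gets it by comparison with suitably doubly cut fundamental solutions. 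Therefore $\inf_{B_{2r}\setminus B_r}u\ge\inf_{B_{2r}\setminus B_r}v_\infty\ge c(2r)^{\sigma^*}=c_{\min}r^{-n+2s}$ for $r$ large.

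\emph{Main obstacle.} The hard step is the lower bound: manufacturing a comparison function that decays like $|x|^{\sigma^*}$ at infinity yet stays below $u$ on the \emph{entire} complement of the domain (not merely on its boundary, as the nonlocal structure demands), equivalently pinning down the sharp decay rate of the exterior profile $v_\infty$. The upper bound needs only a careful but routine control of the constants in the cut-and-correct construction.
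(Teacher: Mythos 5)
Your upper bound follows essentially the paper's route (the paper's $\Psi(x) = 1 - r_0^{-\sigma^*}|x|^{\sigma^*}$ on $B_{2r}$ is your $v_r$ up to the scaling factor $r_0^{-\sigma^*}$, and the collar $g$ and the $\rho(r)$-comparison on $B_r\setminus B_{r_0}$ are exactly what the paper does), so there the two arguments coincide. The lower bound is a genuinely different route. The paper does not pass through the exterior Poisson kernel: it builds a flattened fundamental solution $\hat\Psi(x) = \min\{1, |x|^{\sigma^*}\}$ and adds the bump $\gamma=\chi_{B_1}$ with a carefully tuned coefficient so that the error $(-\De)^s\hat\Psi \le C_9(|x|-1)^{-n-2s}$ is absorbed by $(-\De)^s\gamma \le -C_\gamma(1+|x|)^{-n-2s}$, making $\hat\Psi_\gamma := \hat\Psi + \frac{2C_9}{C_\gamma}\gamma$ a strict subsolution for $|x|>r_0$; it then compares $c(r_0)\hat\Psi_\gamma \le u$ on the annulus $B_{R_*}\setminus B_{r_0}$, with $c(r_0)$ determined by $\inf_{B_{r_0}}u$. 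Your route — comparing $u$ from below against the exterior $s$-harmonic profile $v_\infty$ of the indicator of a fixed shell $S\subset\Om$, then reading the decay $v_\infty \gtrsim |x|^{\sigma^*}$ off the explicit exterior Poisson kernel — is also correct, and has the advantage that the comparison constant is pinned to $\min_S u$ over a compact $S\subset\Om$, which the hypotheses directly guarantee is positive; the paper's $c(r_0) = \frac{C_\gamma}{4C_9}\inf_{B_{r_0}} u$ is more delicate on this point, since the hypotheses only give $u\ge 0$ on the compact hole inside $B_{r_0}$. The paper's construction buys self-containedness and avoids the explicit kernel; your construction buys a cleaner anchoring of the constant, at the cost of invoking the exterior fractional Poisson kernel and implicitly a uniqueness statement for bounded exterior $s$-harmonic functions vanishing at infinity (needed to identify the monotone limit $v_\infty$ with the Poisson integral) — a standard but unstated ingredient. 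Both deliver the same sharp decay rate $|x|^{\sigma^*}$ and both correctly flag the same obstruction, namely that a bare multiple of $\Phi$ cannot be a valid global sub-barrier because the nonlocal comparison sees the full complement where $\Phi$ is unbounded and $u$ is merely nonnegative.
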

	\begin{proof}
		Define a function \begin{equation}\label{cf1}\hat \Psi (x)  :=
			\left\{
			\begin{array}{ll}
				1  & \mbox{if } |x| \leq 1 \\
				|x|^{\sigma^* }& \mbox{if } |x| \geq 1,
			\end{array}
			\right.
		\end{equation} where $\sigma^*=-n+2s<0$.  
		Then, for every $x\in \mb R^n\setminus  B_{1}$, using the fact that $|x|^{\sigma^*}$ is a fundamental solution for $(-\De)^s$,	we calculate the following:
		\begin{align}\label{ca1f}
			&(-\Delta)^s\hat \Psi(x)\notag\\
			%		&=P.V. \int_{\mb R^n}\frac{\hat \Psi(x)-\hat \Psi(x-y)}{|y|^{n+2s}} dy\notag\\
			&=C_{n,s}\left(P.V. \int_{\mb R^n\setminus B_1(x)}\frac{\hat \Psi(x)-\hat \Psi(x-y)}{|y|^{n+2s}} dy+ \int_{ B_1(x)}\frac{\hat \Psi(x)-\hat \Psi(x-y)}{|y|^{n+2s}} dy\right)\notag\\
			&= C_{n,s}\Bigg(P.V.  \int_{\mb R^n\setminus B_1(x)}\frac{|x|^{\sigma^*}-|x-y|^{\sigma^*}}{|y|^{n+2s}} dy+\int_{ B_1(x)}\frac{|x|^{\sigma^*}-1}{|y|^{n+2s}} dy
			%		\notag\\&\qquad\qquad\qquad
			\pm \int_{ B_1(x)}\frac{|x|^{\sigma^*}-|x-y|^{\sigma^*}}{|y|^{n+2s}} dy\Bigg)\notag\\
			&=(-\De)^s(|x|^{\sigma^*}-1)+C_{n,s}	\int_{ B_1(x)}\frac{|x-y|^{\sigma^*}-1}{|y|^{n+2s}} dy\notag\\
			&\leq0+	C_{n,s}\int_{ B_1(x)}\frac{|x-y|^{\sigma^*}}{(|x|-1)^{n+2s}} dy\ \;\;\qquad \text{ (since $|y|\geq |x|-|y-x|\geq|x|-1$)}\notag\\
			&=C_8(s,n)\int_{0}^{1}\frac {t^{\sigma^*+n-1}}{(|x|-1)^{n+2s}} dt \;\;\qquad\qquad \text{  (taking $|x-y|=t$)}\notag\\
			&=C_9(s,n) \frac{1}{(|x|-1)^{n+2s}},
		\end{align} where $C_8(s,n), C_9(s,n)$ are two positive constants. Next, we  define 
		\begin{equation}\label{cf1h}\tilde\gamma (x)  :=
			\left\{
			\begin{array}{ll}
				1  & \mbox{if } |x| <1 \\
				0& \mbox{if } |x| \geq1.
			\end{array}
			\right.
		\end{equation} Then, for $x\in \mb R^n\setminus B_1,$  we obtain
		\begin{align}\label{ca1aa}
			&(-\Delta)^s \tilde\gamma(x)\notag\\
			%		&=P.V. \int_{\mb R^n}\frac{ \tilde\ga(x)- \tilde\ga(x-y)}{|y|^{n+2s}} dy\notag\\
			&=C_{n,s}\left(P.V. \int_{\mb R^n\setminus B_1(x)}\frac{ \tilde\ga(x)- \tilde\ga(x-y)}{|y|^{n+2s}} dy+ \int_{ B_1(x)}\frac{\tilde \ga(x)-\tilde \ga(x-y)}{|y|^{n+2s}} dy\right)\notag\\
			&=0+C_{n,s}	\int_{ B_1(x)}\frac{-1}{|y|^{n+2s}} dy\notag\\
			&\leq C_{n,s }	\int_{ B_1(x)}\frac{-1}{(1+|x|)^{n+2s}} dy\ \;\;\qquad \text{ (since $|y|\leq |x-y|+|x|\leq1+|x|$)}\notag\\
			&=C_{\ga}(s,n)\frac {-1}{(1+|x|)^{n+2s}},
		\end{align}
		where $C_{\ga}(s,n)>0$ is a positive constant. Now set \[\hat\Psi_\ga(x):= \hat \Psi(x)+\frac{2C_9(s,n)}{C_\ga(s,n)}\tilde\ga(x),
		\] where $C_9(s,n)$ and $C_{\ga}(s,n)$ are defined in \eqref{ca1f} and in \eqref{ca1aa}, respectively. Then for any $r>1$, 
		\begin{align}\label{lld}
			c(r)\hat\Psi_\ga(x)\leq u(x),\;\text{ for all  } x\in \overline B_r,
		\end{align} 
		where $c(r):=\ds\frac{C_\ga(s,n)}{4C_9(s,n)}\ds\inf_{B_r}u>0.$
		On the other hand, for any $\e>0$ and for any $r>1$, there exists $R_*:=R_*(\e,r)>r,$ such that
		\begin{align}\label{llde}
			c(r)\hat\Psi_\ga(x)\leq u(x)+\e,\;\text{ for all } x\in  \mb R^n\setminus B_{R_*}.
		\end{align} 
		
		Now by the definition of $\hat\Psi_\ga$, \eqref{ca1f} and \eqref{ca1aa}, there exists a  large $r_0>1$ with $\mb R^n\setminus B_{r_0}\subset \Om,$ such that
		\begin{align}\label{vask}
			(-\Delta)^s	c(r)\hat\Psi_\ga(x)&\leq c(r)	C_9(s,n)\left( \frac{1}{(|x|-1)^{n+2s}}-\frac{2}{(1+|x|)^{n+2s}}\right)\notag\\&<0, \;\text{ for all  $x\in\mb R^n\setminus B_{r_0},$}	\end{align} that is, in particular taking $r=r_0$ and using \eqref{niru}, we get \begin{align}\label{vask1}(-\De)^sc(r_0)\hat\Psi_\ga(x)<0\leq (-\De)^su(x), \;\text{ for all } x\in \mb R^n\setminus B_{r_0}.\end{align} So, in particular,  by choosing  $r=r_0$ in \eqref{lld} and in  \eqref{llde} and applying the comparison principle to \eqref{vask1},  we obtain\[  c(r_0)\hat\Psi_\ga(x)\leq u(x)+\e, \;\text{ for all  } x\in \mb R^n.\] Passing to the limit $\e\to0$ in the last relation, it follows that
		\begin{align}\label{jjjj}c(r_0)\hat\Psi_\ga(x)\leq u(x), \;\text{ for all  } x\in \mb R^n.\end{align}
		Now for any large $r>r_0$, taking infimum over $ B_{2r}\setminus B_r$ in the both sides of \eqref{jjjj}, we find  the first inequality in \EQ{fundycmp}. \\
		To achieve the second inequality in \EQ{fundycmp},  for any $r>r_0$,  first we define a function 
		\begin{equation}\label{cfp}\Psi (x)  :=
			\left\{
			\begin{array}{ll}
				1-	r_0^{-\sigma^*}|x|^{\sigma^*} & \mbox{if } |x| < 2r \\
				0 & \mbox{if } |x| \geq2r, 
			\end{array}
			\right.
		\end{equation}where $\sigma^* :=-n+2s<0$. In addition to that,  for any $r>r_0$, we define another function  \begin{equation}\label{cf2} g (x)  =
			\left\{
			\begin{array}{ll}
				0  & \mbox{if } |x|\leq \frac{3r}{2}\\
				C_g& \mbox{if } \frac{3r}{2}<|x| < 2r\\
				0 & \mbox{if } |x| \geq 2r,
			\end{array}
			\right.
		\end{equation}  where $C_g$ is a positive constant,  independent of $r$, to  be determined later.\\
		Now for any $r>r_0$ and for all $x\in B_r\setminus B_{r_0},$ using the fact that $-|x|^{\sigma^*}$ is a fundamental solution for $(-\De)^s$, we estimate the following:
		\begin{align}\label{ca3q}
			&(-\Delta)^s\Psi (x)\notag\\
			%		&= P.V.\int_{\mb R^n}\frac{ \Psi(x)-\Psi(x-y)}{|y|^{n+2s}} dy\notag\\
			&= C_{n,s}\left(P.V.\int_{B_{2r}(x)}\frac{ \Psi(x)-\Psi(x-y)}{|y|^{n+2s}} dy+ \int_{\mb R^n\setminus B_{2r}(x)}\frac{ \Psi(x)-\Psi(x-y)}{|y|^{n+2s}} dy\right)\notag\\
			&= C_{n,s} \Bigg(P.V.\int_{B_{2r}(x)}\frac{(1-r_0^{-\sigma^*}|x|^{\sigma^*})-(1-r_0^{-\sigma^*}|x-y|^{\sigma^*})}{|y|^{n+2s}} dy
			+ \int_{\mb R^n\setminus B_{2r}(x)}\frac{1-r_0^{-\sigma^*}|x|^{\sigma^*}}{|y|^{n+2s}} dy\notag\\&\qquad\pm  \int_{\mb R^n\setminus B_{2r}(x)}\frac{(1-r_0^{-\sigma^*}|x|^{\sigma^*})-(1-r_0^{-\sigma^*}|x-y|^{\sigma^*})}{|y|^{n+2s}} dy\Bigg)\notag\\
			&=(-\De)^s(1-r_0^{-\sigma^*}|x|^{\sigma^*})+C_{n,s}\int_{\mb R^n\setminus B_{2r}(x)}\frac{1-r_0^{-\sigma^*}|x-y|^{\sigma^*}}{|y|^{n+2s}} dy\notag\\
			&\leq 0+C_{n,s}\int_{\mb R^n\setminus B_{2r}(x)}\frac{1-r_0^{-\sigma^*}|x-y|^{\sigma^*}}{(\frac 12|x-y|)^{n+2s}} dy \qquad \text {( since for $y\in \mb R^n\setminus B_{2r}(x), \, |y|\geq \frac 12 |x-y|$)} \notag\\
			&\leq C_{10}(s,n) \int_{2r}^{+\infty}\frac{(1-r_0^{-\sigma^*}t^{\sigma^*})t^{n-1}}{t^{n+2s}} dt \qquad\qquad\text{  (taking $|x-y|=t$)}\notag\\
			%		&\qquad\qquad\qquad\qquad\qquad \text{integration and using the fact $|y|>|\frac x2|>r_0$ in the second integration )}\notag\\
			&=C_{10}(s,n) \left(\frac {(2r)^{-2s}}{2s}+ r_0^{-\sigma^*}\frac{(2r)^{\sigma^*-2s}}{\sigma^*-2s}\right)
			\notag\\
			&
			\leq C_{11}(s,n) \frac{1}{r^{2s}},
		\end{align}  
		where $C_{10}(s,n), C_{11}(s,n)$ are two positive constants.
		Next, for any $r>r_0$  and for all $x\in B_r\setminus B_{r_0},$ we have 	
		\begin{align}\label{ca3p}
			&(-\Delta)^sg(x)\notag\\
			%		&=P.V. \int_{\mb R^n}\frac{ g(x)- g(x-y)}{|y|^{n+2s}} dy\notag\\
			&= C_{n,s}\left(\int_{{B_{2r}(x)\setminus B_{\frac{3r}{2}(x)}}}\frac{ g(x)- g(x-y)}{|y|^{n+2s}} dy+P.V. \int_{\mb R^n\setminus\left({B_{2r}(x)\setminus B_{\frac{3r}{2}(x)}}\right)}\frac{ g(x)- g(x-y)}{|y|^{n+2s}} dy\right)\notag\\
			&= C_{n,s}\int_{B_{2r}(x)\setminus B_{\frac{3r}{2}(x)}}\frac{ -C_g}{|y|^{n+2s}} dy+0\notag\\
			&\leq C_{n,s} \int_{B_{2r}(x)\setminus B_{\frac{3r}{2}}(x)}\frac{ -C_g}{|3r|^{n+2s}} dy\;\quad \text{(since $ |y|\leq |x-y|+|x|\leq 3r$)} 
			\notag\\
			&	={ -C_g}C_{12}(s,n)\frac{1}{r^{2s}}, 
		\end{align}  where $C_{12}(s,n)>0$ is a positive constant.
		Then, we choose the constant  $C_g$ such that ${ C_g}C_{12}(s,n)>C_{11}(s,n)$, that is, from \eqref{ca3q} and \eqref{ca3p}, it follows that  \begin{align}\label{nitu}(-\De)^s(\Psi+g)(x)<0, \;\;\text{for  all } x\in B_r\setminus B_{r_0}.\end{align} Now any  for $r>r_0$, using \eqref{cfp} and \eqref{cf2}, define the function
		$$ \Psi_g(x):=\frac {\Psi(x)+g(x)}{1+C_g},
		$$  where $C_g$ is as  chosen for  \eqref{nitu}. Clearly $\Psi_g(x)\leq 1.$ So,  combining \eqref{niru} and \eqref{nitu}, for every  $r>r_0$, we find that \begin{align}\label{pe}(-\De)^s\left(\inf_{B_{2r}\setminus B_{r}} u\right)\Psi_g(x)<0\leq(-\De)^su(x),\;\; \text{for all }   x\in B_{r}\setminus  B_{r_0}.\end{align} Moreover, for every $r>r_0,$ we have $\left(\ds\inf_{B_{2r}\setminus B_{r}} u\right)\Psi_g\leq u$ in $\overline {B}_{r_0},$ since $\Psi_g<0$ in $\overline{B}_{r_0}$; and $\left(\ds\inf_{B_{2r}\setminus B_{r}} u\right)\Psi_g\leq u$ in $\mb R^n\setminus  B_{r}$,  since $\Psi_g\leq 1$ in $B_{2r}\setminus B_{r}$ and  $\Psi_g=0$ in $\mb R^n\setminus B_{2r}$. Thus, applying the  comparison principle to \eqref{pe}, for every sufficiently large  $r>2r_0$, we get
		\[ \left(\ds\inf_{B_{2r}\setminus B_r} u\right)\Psi_g\leq u \text {  in  }   \mb R^n,\] and then taking infimum over ${B_{3r_0}\setminus B_{2r_0}}$  in  both sides of the above relation, we infer that  	\[ \ds\left(\ds\inf_{B_{2r}\setminus B_r} u\right)\inf_{{B_{3r_0}\setminus B_{2r_0}}}\Psi_g\leq \ds\inf_{{B_{3r_0}\setminus B_{2r_0}}}u.\]This  gives the second inequality  in  \eqref{fundycmp}. Hence, the proof is complete.
	\end{proof}\hfill{\QED}
	\noi	{\bf Proof of Theorem \ref{slthm} :}
	Fix $r_0>1$  very large.  Let us suppose that there exists a non-negative solution $u$ to the inequality \eqref{slinxs} in $\mb R^n\setminus B_{r_0}$ such that, $u$ is continuous and $u>0$ in $\mb R^n\setminus B_{r_0}$.  Next,			for each $r> 2r_0$, set $$u_r (x) := u(rx).$$  Observe that $u_r$ satisfies the following:
	\begin{equation}\label{ur}
		(-\Delta)^s u_r \geq r^{2s }f(u_r) \quad \mbox{in} \ \R^n \setminus B_{\frac{r_0}{r}}.
	\end{equation} 
	For each $r> 2r_0$, as in the previous section, we define
	\begin{equation*}
		m(r): = \inf_{\bar B_2\setminus B_1} u_r = \inf_{\bar B_{2r} \setminus B_r} u.
	\end{equation*}
	Let us set $A_r:=\{x\in B_2\setminus B_1: m(r)\le u_r(x)\le \bar C m(r)\}$, $r>2r_0,$ where $\bar C = \bar C(s,n,\frac{1}{2})> 1$ is a constant  given as in Lemma \ref{vwhlap}. Then from Lemma \ref{vwhlap}, we  have
	\begin{equation*}
		|A_r|\ge \frac 12|B_2\setminus B_1|.
	\end{equation*}
	Hence,  using Proposition \ref{kslap} by choosing $A=A_r$ and $h(x):= r^{2s }f(u_r(x))$, we obtain
	\begin{align}\label{limmins1}
		m(r)  \geq \frac{1}{2} \bar cr^{2s} |B_2\setminus B_1|\min_{t\in \left[ m(r),\bar C m(r)\right]} f(t), \quad \mbox{for every} \ r> 2r_0,
	\end{align}
	where the constant  $\bar c>0$ is as in Proposition \ref{kslap},  which does not depend on $r$. Then \eqref{limmins1} and  Lemma \ref{fundycmpl} yield that 
	\begin{equation} \label{limmins}
		\min_{t\in \left[ m(r),\bar C m(r)\right]} f(t)\le \frac{2}{\bar c}r^{-2s}m(r)\le \frac{2}{\bar c} C_Mr^{-2s} \to 0\quad\mbox{ as }\; r\to +\infty,
	\end{equation} where $C_{M} $ is as given in \eqref{fundycmp}.
	Moreover, using  the continuity and positivity  of the function $f$ and the fact that $m(r)\le C_M$ by \eqref{fundycmp}, it follows that
	\begin{align}\label{m0}m(r) \to 0 \text {   as } r\to +\infty.
	\end{align}
	Hence, for sufficiently large $r>0$, combining  \EQ{fpow} and \EQ{limmins}, we infer that
	\begin{equation}\label{limmins2}
		(m(r))^{\frac{n}{n-2s}} \le \min_{t\in \left[ m(r),\bar C m(r)\right]} f(t)\le  \frac{2}{\bar c} C_M r^{-2s}m(r),
	\end{equation} that is,
	\begin{equation} \label{mupbnd}
		m(r) \leq \frac{2}{\bar c} C_M r^{-n+2s}, \quad \mbox{for every sufficiently large} \ r> 2r_0.
	\end{equation}  Thus, from \eqref{fundycmp} and \eqref{mupbnd}, we have	\begin{equation} \label{fundycmp.2}
		c_{\min} r^{-n+2s} \leq \inf_{ B_{2r} \setminus B_r} u \leq C_{\max}r^{-n+2s}, \quad \mbox{for every sufficiently large} \ r> 0,
	\end{equation} where $c_{\min} $ is as given in \eqref{fundycmp} and $C_{\max}$ is  a positive constant,   not depending on   $r$. On the other hand, using \eqref{limmins1}, \eqref{limmins2} and \eqref{fundycmp.2}, for every $r>2r_0$, we achieve
	\begin{align}\label{lkj}
		\frac{1}{2} \bar cr^{2s} |B_2\setminus B_1|\min_{t\in \left[ m(r),\bar C m(r)\right]} f(t)&\geq \frac{1}{2} \bar cr^{2s} |B_2\setminus B_1|(m(r))^{\frac{n}{n-2s}}\notag\\&\geq \frac{\bar c c_{\min} }{2} |B_2\setminus B_1| r^{-n+2s}\nonumber\\&=:ar^{-n+2s}.
	\end{align}
	Next,  for any $r>2r_0,$ define the following  functions:
	\begin{equation}\label{cf15} \hat w (x)  :=
		\left\{
		\begin{array}{ll}
			0 & \mbox{if } |x| <r \\
			|x|^{\sigma^* }& \mbox{if } |x| \geq  r;
		\end{array}
		\right.
	\end{equation}   \begin{equation}\label{cf155} \hat \ga(x)  :=
		\left\{
		\begin{array}{ll}
			0 & \mbox{if } |x| <r\\
			\mu |x|^{\sigma^*} & \mbox{if }  r\leq|x| \leq \frac {3r}{2}\\
			0& \mbox{if } |x| >\frac {3r}{2},
		\end{array}
		\right.
	\end{equation} 
	where $\sigma^*=-n+2s<0$ and $\mu>0$ is a positive constant to be chosen later.
	Now for any $r>2r_0,$ set the quantity
	\begin{equation*}
		\rho(r) : = \inf_{B_{2r}\setminus B_r} \frac{u}{w_\ga} > 0, 
	\end{equation*}  where 
	\[w_{\ga}(x):= \hat w(x)+\hat\ga(x).\]
	{Note that $ \rho(r)$ is bounded above for all $r>2r_0$. Indeed, by \eqref{fundycmp.2} and the definition of $w_\ga$, 
		\begin{align}\label{cmax}
			\rho(r)\leq 	\inf_{B_{2r}\setminus B_{r}} \frac{u}{w_\ga}
			\leq 
			\frac{\ds\inf_{B_{2r}\setminus B_{r}} u}{\ds\inf_{B_{2r}\setminus B_{r}} w_\ga} 
			\leq 
			\frac{C_{\max}r^{\sigma^*}}{(2r)^{\sigma^*}}
			=: C^\prime, \end{align} where $C_{\max}>0$   is   as in \eqref{fundycmp.2} and $C^\prime$ is a positive constant not depending on $r$.} 
	Next,  for  all $x\in \mb R^n\setminus B_{2r}$ with $r>2r_0$, using  that $|x|^{\sigma^*}$ is a fundamental solution for $(-\De)^s$,  we deduce
	\begin{align}\label{ca10}
		&(-\Delta)^s\hat w (x)\notag\\
		&= C_{n,s}\left(\int_{B_{r}(x)}\frac{ \hat w(x)- \hat w(x-y)}{|y|^{n+2s}} dy+P.V. \int_{\mb R^n\setminus B_{r}(x)}\frac{ \hat w(x)- \hat w(x-y)}{|y|^{n+2s}} dy\right)\notag\\
		&= C_{n,s}\left(\int_{B_{r}(x)}\frac{ |x|^{\sigma^*}}{|y|^{n+2s}} dy+P.V. \int_{\mb R^n\setminus B_{r}(x)}\frac{ |x|^{\sigma^*}- |x-y|^{\sigma^*}}{|y|^{n+2s}} dy\pm \int_{ B_{r}(x)}\frac{ |x|^{\sigma^*}- |x-y|^{\sigma^*}}{|y|^{n+2s}} dy\right)\notag\\
		&=C_{n,s}	\int_{ B_{r}(x)}\frac{|x-y|^{\sigma^*}}{|y|^{n+2s}} dy+(-\De)^s|x|^{\sigma^*}\notag\\
		&\leq C_{n,s}	\int_{ B_r(x)}\frac{|x-y|^{\sigma^*}}{(|x|-r)^{n+2s}} dy+0\ \;\;\qquad \text{ (since $|y|\geq |x|-|y-x|\geq|x|-r$)}\notag\\
		&=C_{13}(s,n)\int_{0}^{r}\frac {t^{\sigma^*+n-1}}{(|x|-r)^{n+2s}} dt \;\;\qquad\qquad \text{  (taking $|x-y|=t$)}\notag\\
		&=C_{14}(s,n) \frac{r^{2s}}{(|x|-r)^{n+2s}},
	\end{align} where $C_{13}(n), C_{14}(s,n)$ are  positive constants.  Furthermore,  for any  $r>2r_0$ and    for  all $x\in \mb R^n\setminus B_{2r}$, using \eqref{ca10}, we get that
	\begin{align}\label{ca10l}
		&(-\Delta)^s\hat \ga (x)\notag\\
		&= C_{n,s}\left(\int_{ B_{\frac {3r}{2}}(x)\setminus B_{r}(x)}\frac{ \hat \ga(x)- \hat \ga(x-y)}{|y|^{n+2s}} dy+P.V. \int_{\mb R^n\setminus\left(B_{\frac {3r}{2}}(x)\setminus B_{r}(x)\right)}\frac{ \hat \ga(x)- \hat \ga(x-y)}{|y|^{n+2s}} dy\right)\notag\\
		&=C_{n,s}\int_{ B_{\frac {3r}{2}}(x)\setminus B_{r}(x)}\frac{-\mu|x-y|^{\sigma^*}}{|y|^{n+2s}} dy+0
		\notag\\
		&\leq C_{n,s}	\int_{ B_{\frac {3r}{2}}(x)\setminus B_{r}(x)}\frac{-\mu|x-y|^{\sigma^*} }{(|x|+2r)^{n+2s}} dy\; \text{ (since $|y|\leq |x-y|+|x|\leq 2r+|x|$)}\notag\\
		&	=C_{15}(s,n)\int_{3r}^{4r}\frac{-\mu t^{\sigma^*+n-1} }{(|x|+2r)^{n+2s}}dt\; \text{ (taking $|x-y|=t$ )}\notag\\
		%											&=C_\ga(n)\frac {-1}{(|x|+4r)^{n+2s}}\\\notag
		&	\leq- C_{16}(s,n)\frac{\mu r^{2s}}{(|x|+2r)^{n+2s}},
	\end{align} where $C_{15}(s,n)$ and $C_{16}(s,n)$ are some positive constants. Therefore, using \eqref{ca10} and \eqref{ca10l}, choose $\mu>0$ such that $C_{16}(s,n)\mu>2 C_{14}(s,n)$ and then   there exists  a large $  R_0:=  R_0( s,n,\mu, C_{14}(s,n), C_{16}(s,n), r_0 )>2r_0$ such that $	(-\Delta)^s(\hat w+\hat \ga)(x)<0,$ for all $|x|>  R_0.$ Thus,  for all   $r>2 R_0$, we have
	\begin{align}\label{ri}(-\De)^s\rho(r)w_\ga<0<f(u)\leq (-\De)^s\;\;\text{in  }\mb R^n\setminus B_{r}.\end{align}
	Observe from the definition of $w$ and $\rho$ that, for any $r>2R_0$, it hold that
	\begin{align}\label{lldb}
		\rho(r)w_\ga\leq 	u\;\;\text { in }  \overline B_{r}.
	\end{align} 				 
	\noi	On the other hand, for any given  $\e>0$ and $r>2R_0$,  there exists a very large  $R:=R(\e, R_0,r)>r$,  such that  
	\begin{align}\label{lldeh}
		\rho(r)w_\ga\leq 	u+\e\;\; \text{ in }   \mb R^n\setminus B_{R}.
	\end{align} 
	\noi	 So,  for  any sufficiently large $r> 2R_0$, applying the comparison principle to  \eqref{ri},   \eqref{lldeh} and  \eqref{lldb}, 
	we obtain
	\[ \rho(r)w_\ga(x)\leq 	u(x)+\e, \;\text{ for all  } x\in \mb R^n.\] {Passing to the limit $\e\to0$ in the last relation,  for sufficiently large $r>  R_0$,  it follows that
		\begin{equation}\label{ineqs1}
			\rho(r)w_\ga(x)\leq 	u(x), \;\; \mbox{for  all } \ x\in \mb R^n.
		\end{equation}
		Thus, \eqref{ineqs1} implies that  \begin{align}\label{gbl}
			\rho(r) = \ds\inf_{\mb R^n} \frac {u} {w_\ga}=:C^*,
		\end{align} where $C^*>0$ is a positive constant, independent of  $r$.
		Now for every sufficiently large $r>2R_0$, define the function
		\begin{equation*}
			\varphi_r(x) := u_r(x) - C^* w_\ga(rx), 
		\end{equation*} where $C^*$ is as in \eqref{gbl}.
		Observe that by \EQ{gbl}, we get  $\varphi_r (x)\geq 0$ in $\R^n$. 
		Also, using the  definition of $\varphi_r$, \eqref{ur} and \eqref{ri}, for sufficiently large $r>2 R_0$, we get
		\begin{align}\label{gvg}
			(-\Delta)^s \varphi_r(x) &= (-\De)^su_r(x)-C^*(-\De)^sw_\ga(rx)\geq r^{2 s}f(u_{r}), \text{  for all   } x\in\mb R^n\setminus B_{\frac12}.
		\end{align}
		Therefore, for every sufficiently large $r>2 R_0$, applying Proposition \ref{kslap} with $A=A_r$ and $h(x):= r^{2s}f(u_r(x))$ and using  \eqref{lkj}, we deduce that
		\begin{equation}\label{e1}
			\inf_{B_2\setminus B_1} \varphi_r \geq \frac 12 \bar c |B_2\setminus B_1| \min_{t\in \left[ m(r),\bar C m(r)\right]}f(t)  \geq a r^{
				\sigma^*}, 
		\end{equation} where the  positive constants $\bar c$ and $a$ are as defined in Proposition \ref {kslap} and in \eqref{lkj}, respectively, which are independent of $r$.
		Thus, for every sufficiently large $r>2R_0$ and  for  all $x\in B_{2}\setminus B_{1},$ from \eqref{e1} and the definition of $w_\ga$, we deduce
		\begin{align}\label{pn}
			u_r(x)-C^*w_\ga(rx)&\geq a|rx|^{\sigma^*}\geq \frac{a}{1+\mu}w_\ga(rx).
		\end{align}	 This implies that 
		there exists some large $r_1>2R_0$ such that {for every } $\ r \geq r_1$,  we have
		\begin{align}\label{e2}
			\left( C^*+\frac{a}{1+\mu}\right) w_\ga(x)\leq u(x), \text{  for all  } x\in B_{2r}\setminus B_{r}.
		\end{align} Thus, combining \eqref{e2} and \eqref{gbl}, it yields that \[C^*+\frac{a}{1+\mu}\leq \inf_{ B_{2r} \setminus B_{r}}\frac{u(x)}{w_\ga(x)}=\rho(r)=C^*,\] which is absurd. 
		Thus, the proof of the theorem is complete.}\hfill{\QED}

	%%%%%%%%%%%%%%%%%%%%%%%%%%%%%%%%%%%%%%%%%%%%%%%%%%%%%%%%%%%%%%%%%%%%%%%%%%%%%%%%%%%%%%%%%%%%%%%%%%%%%%%%%%%%%%%%%%%%%%%%%%%%%%
	
	\section{Data Availability}
	Data sharing is not applicable to this article as no data sets were generated or analyzed during the current study.
	%%%%%%%%%%%%%%%%%%%%%%%%%%%%%%%%%%%%%%%%%%%%%%%%%%%%%%%%%%%%%%%%%%%%%%%%%%%%%%%%%%%%%%%%%%%%%%%%%%%%%%%%%%%%%%%%%%%%%%%%%%%%%%%%%%%%%%
	\section{Conflict of Interest}
	On behalf of all authors, the corresponding author states that there is no conflict of interest.
	
	\section{Acknowledgment} We sincerely thank the anonymous reviewers for their  comments and valuable suggestions.\\ 
	R. Biswas was supported by ANID FONDECYT Postdoctoral Project No. 3230657 and A. Quaas was partially supported by ANID FONDECYT Grant No. 1231585.
	\linespread{0.5}
	

\begin{thebibliography}{21}
		\linespread{0.1}
		
		\bibitem{adv} N.  Abatangelo, S.  Dipierro, E. Valdinoci, A gentle invitation to the fractional world, https://doi.org/10.48550/arXiv.2411.18238, 2024.
		\bibitem{AGQ}  S. Alarcon, J. Garcia-Melian, A. Quaas, Optimal Liouville theorems for supersolutions of elliptic equations with the Laplacian,
		Ann. Sc. Norm. Super. Pisa Cl. Sci. (5) Vol. XVI (2016), 129-158.
		
		
		\bibitem{AS1}	N. Armstrong,  S.  Boyan,  Nonexistence of positive supersolutions of elliptic equations via the maximum principle,  Comm. Partial Differential Equations., 36 (2011), 2011-2047.
		\bibitem{BI} G. Barles, C. Imbert, Second-order elliptic integro-differential equations: Viscosity solutions theory revisited, Ann. Inst. H. Poincare Anal. Non Lineaire., 25 (2008), 567-585.
		\bibitem{bm} B. Barrios, M. Medina,  I. Peral, Some remarks on the solvability of non-local elliptic problems with the Hardy potential, Commun. Contemp. Math.  16 (2014): 1350046.
		\bibitem{bisci} G. M. Bisci, V. D. Radulescu, R. Servadei, Variational Methods for Nonlocal Fractional Problems, Cambridge University Press, 2016.
		
		\bibitem{BVP}	M.-F. Bidaut-Veron,  S. Pohozaev,  Nonexistence results and estimates for
		some nonlinear elliptic problems. J. Anal. Math., 84 (2001), 1-49.
		\bibitem{BV}  M.-F. Bidaut-Veron, Local and global behavior of solutions of
		quasilinear equations of Emden-Fowler type, Arch. Rational Mech. Anal.,
		107 (1989), 293-324.
		\bibitem{BC1} H. Brezis, X.  Cabré, Some simple nonlinear PDE's without solutions,  Boll.  Unione Mat.  Ital. Sez. B Artic. Ric. Mat., 1 (1998),  223-262.
		\bibitem{bukur}	C. Bucur,  E. Valdinoci, Nonlocal diffusion and applications, Vol. 20, Cham: Springer, 2016.
		\bibitem{CS1}	L. Caffarelli,  L. Silvestre,  Regularity theory for fully nonlinear integro‐differential equations, Comm.   Pure  Appl. Math.,  62 (2009), 597-638.
		\bibitem{CS2}	L. Caffarelli,  L. Silvestre,  An extension problem related to the fractional Laplacian, Comm.   Partial Differential Equations.,  32 (2007), 1245-1260.
		\bibitem{chen2} H. Chen, R. Peng, F. Zhou, Nonexistence of positive supersolutions to a class of semilinear elliptic equations and
		systems in an exterior domain, Sci. China Math., 63 (2020), 1307-1322.
		\bibitem{chen1}H. Chen, Y. Wang, H. Hajaiej, Liouville theorem for semilinear elliptic inequalities involving the fractional Hardy
		operators, Trans. Amer. Math. Soc., 378 (2025),  339-374.
		\bibitem{CS3}	L. Caffarelli,  L. Silvestre,  Regularity Results for Nonlocal Equations
		by Approximation, Arch. Rational Mech. Anal., 200 (2011), 59-88.
		\bibitem{chen} W. Chen, C. Li, B. Ou, Classification of solutions for an integral equation, Comm. Pure Appl. Math., 59 (2006)
		330-343.
		\bibitem{FQ1}P. Felmer,  A. Quaas, Fundamental solutions and Liouville type theorems for nonlinear integral operators,  Adv.  Math., 226 (2011), 2712-2738.
		\bibitem{gidas} B.  Gidas, Symmetry properties and isolated singularities of positive solutions of nonlinear elliptic equations, In: Nonlinear partial differential equations
		in engineering and applied science, vol. 54 of Lecture Notes in Pure and
		Appl. Math.,  255-273,  Dekker, New York, 1980.
		\bibitem{GS}  B. Gidas,  J. Spruck, Global and local behavior of positive solutions of
		nonlinear elliptic equations, Comm. Pure Appl. Math., 34 (1981), 525-598.
		\bibitem{kass}M. Kassmann,  Harnack inequalities and H\"older regularity estimates for nonlocal operators
		revisited. Preprint (2011). Available at http://www.math.uni-bielefeld.de/sfb701/preprints/view/523.
		\bibitem{KLS} V.  Kondratiev, V. Liskevich,  Z. Sobol, Positive supersolutions to semi-linear second-order non-divergence type elliptic equations
		in exterior domains, Trans. Amer. Math. Soc., 361(2009), 697-713.
		\bibitem{LSS} V. Liskevich, I. I. Skrypnik, I. V. Skrypnik, Positive supersolutions to
		general nonlinear elliptic equations in exterior domains,  Manuscripta Math.,
		115(2004), 521-538.
		\bibitem{lii}Y. Y. Li, Remark on some conformally invariant integral equations: the method of moving spheres, J. Eur. Math.
		Soc.,  6 (2004) 153-180.
		
		\bibitem{Li} Y. Li,  L. Zhang,  Liouville-type theorems and Harnack-type inequalities for semilinear elliptic equations, J. Anal. Math., 90 (2003), 27-87.
		\bibitem{MP} E. Mitidieri, S. I. Pokhozhaev, A priori estimates and the absence of
		solutions of nonlinear partial differential equations and inequalities, Tr. Mat.
		Inst. Steklova, 234 (2001), 1-384.
		\bibitem{hit}	E. Di Nezza, G. Palatucci, E. Valdinoci, Hitchhiker’s guide to the fractional Sobolev spaces,
		Bull. Sci. math., 136 (2012), 521-573.
		\bibitem{NS} W.-M. Ni,  J. Serrin, Nonexistence theorems for quasilinear partial
		differential equations, In: Proceedings of the conference commemorating the 1st
		centennial of the Circolo Matematico di Palermo,
		8 (1985),  171-185.
		\bibitem{AP1} L. M.  Del Pezzo, A. Quaas, A Hopf's lemma and a strong minimum principle for the fractional p-Laplacian, J. Differential Equations., 263 (1), 765-778.
		\bibitem{Ro} X. Ros-Oton, Nonlocal elliptic equations in bounded domains: a survey, Publ. Mat. 60 (2016) 3-26.
		\bibitem{RS} X. Ros-Oton, J. Serra, The Dirichlet problem for the fractional Laplacian: regularity up to the
		boundary, J. Math. Pures Appl., 101 (2014), 275-302.
		\bibitem{SZ} J. Serrin,  H.  Zou,  Cauchy-Liouville and universal boundedness theorems for quasilinear elliptic equations and inequalities, Acta Math.,
		189 (2002), 79-142.
		\bibitem{SV1}	 R. Servadei, E. Valdinoci, Weak and viscosity solutions of the fractional Laplace equation,
		Publ. Mat., 58 (2014), 133-154.
		
		\bibitem{VB} L.  Veron, Singularities of solutions of second order quasilinear equations,
		vol. 353 of Pitman Research Notes in Mathematics Series,  Longman, Harlow, 1996.
		
		
		
		
	\end{thebibliography}
\end{document}